\setlist[enumerate]{leftmargin=.5in}
\setlist[itemize]{leftmargin=.5in}
\crefname{hypothesis}{Hypothesis}{Hypotheses}
\crefname{fact}{Fact}{Facts}
\title{Implicit Multiple Tensor Decomposition \thanks{Submitted to the editors DATE.
}}
\author{Kunjing Yang\thanks{The School of Mathematics, Hunan University, Changsha, Hunan 410082, P. R. China (\email{kunjing-yang@hnu.edu.cn}).}
\and Libin Zheng\thanks{The School of Mathematics, Hunan University, Changsha, Hunan 410082, P. R. China (\email{fifholz301@hnu.edu.cn}).}
\and Minru Bai\thanks{ The Corresponding author with School of Mathematics, Hunan University, Changsha, Hunan 410082, P. R. China (\email{minru-bai@hnu.edu.cn}).}
}
\newtheorem{remark}{Remark}[section]
\newtheorem{defn}{Definition}[section]
\begin{document}
	
	\maketitle
	
	\begin{abstract}
		Recently, triple decomposition has attracted increasing attention for decomposing third-order tensors into three factor tensors. However, this approach is limited to third-order tensors and enforces uniformity in the lower dimensions across all factor tensors, which restricts its flexibility and applicability.  To address these issues, we propose the Multiple  decomposition, a novel framework that generalizes triple decomposition to  arbitrary order tensors and allows the short dimensions of the factor tensors to differ. We  establish its connections with other classical tensor decompositions. Furthermore,  implicit neural representation (INR) is employed to continuously represent the factor tensors in Multiple decomposition, enabling the method to generalize to non-grid data. We refer to this INR-based Multiple decomposition as  Implicit Multiple Tensor Decomposition (IMTD). Then, the Proximal Alternating Least Squares (PALS)  algorithm is utilized to solve the IMTD-based tensor reconstruction models. Since the objective function in IMTD-based models often lacks the Kurdyka–Łojasiewicz (KL) property, we establish a KL-free convergence analysis for the algorithm.  Finally, extensive numerical experiments further validate the effectiveness of the proposed method.
	\end{abstract}
	
	\begin{keywords}
		Triple decomposition,  Multiple  decomposition, arbitrary order tensors, implicit neural representation,  non-grid data, convergence analysis.
	\end{keywords}
	
	\begin{MSCcodes}
		15A23, 15A69, 68U10
	\end{MSCcodes}
	
	\section{Introduction}
	With the rapid development of big data processing in recent years, tensors have received significant attention and have become one of the most powerful tools for representing multi-dimensional data. Currently, tensors play a pivotal role in various fields, including remote sensing image processing \cite{ Chen2022, B2018}, computer vision \cite{Liu2013, Zhang2021}, deep learning \cite{Sa2024},  and signal processing \cite{Lin2020}. 
	
	Numerous real-world tensor datasets inherently exhibit low-rank structures, reflecting an underlying low-dimensional nature \cite{Luo2024}. This property has been widely observed in regular grid-based data such as images \cite{Zhao1}, videos \cite{Be2017}, and wireless channel measurements \cite{Ar2021}. Recently, low-rank characteristics have also been identified in tensor representations of irregular, non-grid data, such as point cloud  \cite{Zhu2022, Luo2024}.  Presently, various tensor decompositions and corresponding tensor ranks have been proposed to explore the underlying low-rank structure of  data. Among the most classical  are the CANDECOMP/PARAFAC (CP) decomposition and Tucker decomposition. The CP decomposition  represents a tensor as a sum of rank-one tensor components, and the minimum number of these rank-one tensors is called the CP rank \cite{Kolda}. Under mild conditions, the CP decomposition is  unique, a property that has enabled its widespread use in diverse fields, such as chemical component analysis \cite{Wang2024} and spectral unmixing \cite{Im2020}. However, computing the exact CP rank is NP-hard \cite{Si2008}, making its accurate determination computationally intractable in general. Tucker decomposition factorizes a tensor into a core tensor and a collection of factor matrices associated with each mode. The Tucker rank  is the tuple of the matrix ranks of the tensor's mode-$n$ unfoldings \cite{Kolda}. In fact, CP decomposition can be regarded as a special case of Tucker decomposition in which the core tensor is super-diagonal. Currently, Tucker decomposition is widely used in applications such as data compression \cite{Zheng2025} and image restoration \cite{CSTF}. However, the core tensor  usually contains a large number of parameters, resulting in high storage and computational complexity \cite{TriB}.
	
	Another family  is the tensor network decompositions, including Tensor Train (TT)  \cite{TT}, Tensor Ring (TR)  \cite{TR} and Fully Connected Tensor Network (FCTN) \cite{FCTN} decompositions. These approaches have been successfully applied to image reconstruction \cite{Jin2022}  and data compression \cite{Ak2024}. However, both TT and TR decompositions only model interactions between adjacent factor tensors,  failing to capture long-range dependencies. Although  FCTN decomposition allows full connectivity among all factors, it comes at the cost of substantially higher storage and computational complexity.
	In recent years, tensor singular value decomposition (t-SVD) \cite{Kilmer2011} has   attracted increasing attention. Within this framework,   the tensor nuclear norm (TNN), as a convex relaxation of tensor tubal rank,  has  been widely applied in fields such as tensor completion \cite{Zhao1} and robust principal component analysis \cite{TSVD}. To enhance flexibility, the transform-based tensor nuclear norm (TTNN) has also been proposed for low-rank approximation \cite{Qiu2021a, Li2022a}.  Wang \textit{et al.} \cite{Wang2025} extended the t-SVD framework to functional settings for  missing slice tensor recovery tasks. Despite its popularity, however,  t-SVD is limited to the decomposition of third-order tensors \cite{MTTD}. 
	
	More recently, Qi \textit{et al.} \cite{TriB} developed the  triple decomposition, which decomposes a third-order tensor into three factor tensors. Each factor tensor has one dominant dimension, which is considered to  inherit the features along the corresponding modes. Currently, triple decomposition has found widespread applications in areas such as  traffic network recovery \cite{Ming2024} and image fusion \cite{Yang2025}. Nonetheless, in contrast to classical tensor decompositions such as CP and Tucker, triple decomposition is specifically designed for third-order tensors, which limits its applicability to higher-order data. Moreover, triple decomposition requires the short dimensions of the factor tensors to be equal, which limits its flexibility and interpretability.

	To address these issues,  we  propose the Multiple decomposition, a method which decomposes an $N$ th-order tensor ($N\ge 3$) into $N$ factor tensors.  Each factor tensor also has only one dominant dimension and is designed to inherit features from the corresponding mode of the original tensor. Unlike triple decomposition, multiple decomposition allows the sizes of the non-dominant dimensions in the factor tensors to be flexible and independently adjustable. Theoretically, we prove that this variability in short dimensions enables a higher compression ratio, improving storage and computational  efficiency. We also establish the connections between Multiple decomposition and other classical tensor decompositions.
	In addition, it is worth noting that most existing tensor decomposition methods are  designed for grid-structured  tensor data, making them ill-suited for non-grid data formats. To further extend the applicability of Multiple decomposition, we incorporate implicit neural representation (INR) \cite{INR} to represent the factor tensors as continuous functions. Specifically, each factor tensor is modeled as an implicit function,  parameterized by a neural network that maps a coordinate  to the associated value in the tensor.  We refer to this INR-based Multiple decomposition as Implicit Multiple Tensor Decomposition (IMTD), which enables the modeling of low-rank structures in irregular and non-grid data. 
	Then, the Proximal Alternating Least Squares (PALS) algorithm is employed to solve the tensor reconstruction problem within the IMTD framework. Under mild conditions, we show that the implicit functions used to represent the  target tensors in IMTD are Lipschitz continuous. Then, we provide a convergence analysis for the PALS algorithm that does not rely on the Kurdyka–Łojasiewicz (KL) property  \cite{Attouch,KL3}. This is  important since the KL property can be difficult to verify for deep neural networks. Finally, extensive numerical experiments are conducted to validate the effectiveness and broad applicability of the proposed method.

	The main contributions of this paper are summarized as follows:
	\begin{itemize}
		\item  We propose the novel Multiple  decomposition, which  decomposes a $N$ th-order tensor ($N\ge 3$) into $N$ factor tensors. In fact, multiple decomposition can be viewed as a generalization of triple decomposition, extending its applicability from third-order tensors to  arbitrary order tensors. Moreover, Multiple  decomposition allows the short dimensions of factor tensors to differ, enhancing flexibility and adaptability of the model.

		\item By integrating Multiple decomposition with implicit neural representation (INR), we develop the Implicit Multiple Tensor Decomposition (IMTD), a framework capable of representing the factor tensors as continuous implicit functions  and performing decomposition on non-grid structured data. 
		
		\item Theoretically,  we establish the connections between the Multiple decomposition and other classical tensor decompositions. Under mild conditions, we prove that the IMTD-induced tensor function is  Lipschitz continuous. Moreover, we  provide a KL-free convergence analysis for the PALS algorithm. Finally, extensive numerical experiments are conducted to demonstrate the effectiveness of the proposed method.
	\end{itemize}
	
	The remaining sections of this paper are organized as follows.   Section \ref{B} outlines the essential notation and definitions.  The proposed IMTD is  introduced in Section \ref{D}. In Section \ref{E}, we present corresponding model, algorithm and convergence analysis. Finally, we provide extensive experiments to verify the effectiveness of IMTD in Section \ref{NN}  and make a conclusion in Section \ref{G}.
	
	\section{Preliminary}\label{B}
	In this paper, we adopt the following notation: scalars are denoted by lowercase letters, such as `$a$'; vectors are denoted by bold lowercase letters such as `$\textbf{a}$'; matrices are represented by bold uppercase letters such as `$\textbf{A}$'; tensors are denoted by calligraphic letters such as `$\mathcal{A}$'; sets and fields  are denoted by hollow letters such as `$\Re$'. The `max', `submax', and `mid' denote the maximum value, the second largest element, and the median operation, respectively.
	
	For  $N$-order tensor $\mathcal{X}\in \Re^{I_{1}\times I_{2} \times...\times I_{N}}$, we denote $\mathcal{X}_{(n)}\in\Re^{I_n\times (I_1...I_{n-1}I_{n+1}...I_N)}$ as the mode-$n$ unfolding matrix of $\mathcal{X}$. 
	The Frobenius norm (F norm) of $\mathcal{X}$ is defined as $\Vert \mathcal{X} \Vert_F=\sqrt{\sum_{i_1=1}^{I_1}\sum_{i_2=1}^{I_2}...\sum_{i_N=1}^{I_N}|\mathcal{X}_{i_1i_2...i_N}|^2}$, and its $\ell_1$ norm is defined as $\Vert \mathcal{X}\Vert_{\ell_1}=\sum_{i_1=1}^{I_1}\sum_{i_2=1}^{I_2}...\sum_{i_N=1}^{I_N}|\mathcal{X}_{i_1i_2...i_N}|$.
	In this paper, F norm is used if there is no special explanation.
	The mode-$n$ product of a tensor $\mathcal{A}\in \Re^{I_{1}\times I_{2},...,\times I_{N}}$ with a matrix $\textbf{U}\in \Re^{J\times I_n}$ is denoted by $\mathcal{A}\times_{n}\textbf{U}$ and is of size $I_{1}\times,...,\times I_{n-1}\times J\times I_{n+1}\times...,I_{N}$. Element-wise, we have
	$
	\label{a}
	(\mathcal{A}\times_n\textbf{U})_{i_{1,...,i_{n-1}ji_{n+1},...,i_{N}}}= \sum_{i_{n}}^{I_n}\mathbf{a}_{i_1i_2...i_{N}}u_{ji_{n}}.	 
	$
	The mode-$n$ product can also be represented as matrix multiplication:
	$
	\mathcal{Y}=\mathcal{A}\times_n\textbf{U} \iff  \textbf{Y}_{(n)}=\textbf{U}\mathcal{A}_{(n)}.
	$
	The result of a series of multiplications in different modes is independent of the order of multiplication, i.e.,
	$
	\mathcal{A}\times_{n}\textbf{U}\times_{m}\textbf{V}=\mathcal{A}\times_{m}\textbf{V}\times_{n}\textbf{U} \quad(m\neq n).
	$
	If the modes are the same, then
	$
	\mathcal{A}\times_{n}\textbf{U}\times_{n}\textbf{V}=\mathcal{A}\times_{n}(\textbf{VU}).
	$
	Next, we introduce some classical tensor decomposition methods. 
	\begin{defn}\label{B2} \textbf{(CP decomposition \cite{Kolda})}
		Suppose that  $\mathcal{X}\in \Re^{I_1 \times I_2 \times...\times I_N}$. Let $\textbf{A}_{n}  \in \Re^{I_n \times r}$, $n = 1,2,...,N$ be the factor matrices. If
		\begin{equation}\label{BB1}
			\mathcal{X}_{i_1i_2...i_N} = \sum_{p=1}^{r} (\textbf{A}_1)_{i_1p} (\textbf{A}_2)_{i_2p} ... (\textbf{A}_N)_{i_Np}
		\end{equation}
		for $i_n = 1, \ldots, I_n$ and $n = 1, \ldots, N$, then we say that $\mathcal{X}$ has a CP decomposition $\mathcal{X} = [[\textbf{A}_1, \textbf{A}_2,..., \textbf{A}_N]]$. The smallest integer $r$ such that (\ref{BB1}) holds is called the CP rank of $\mathcal{X}$, and is denoted as $\mathrm{CPRank}(\mathcal{X}) = r$.
	\end{defn}
	
	\begin{defn}\label{B3} \textbf{(Tucker rank \cite{Kolda})}
		Suppose that the tensor $\mathcal{X}\in \Re^{I_1 \times I_2 \times...\times I_N}$. We may unfold $\mathcal{X}$ to a matrix $\mathcal{X}_{(n)} \in \Re^{I_n \times I_1...I_{n-1}I_{n+1}...I_N}$. Denote the matrix ranks of $\mathcal{X}_{(n)}$ as $r_n$, $n = 1,2,...,N$, respectively. Then the vector $(r_1, r_2,..., r_N)$ is called the Tucker rank of $\mathcal{X}$, and is denoted as $\mathrm{TuckRank}(\mathcal{X}) = (r_1, r_2,..., r_N)$.
	\end{defn}
	
	\begin{defn}\textbf{(Tucker decomposition \cite{Kolda})}
		Suppose that the tensor $\mathcal{X}\in \Re^{I_1 \times I_2 \times...\times I_N}$. Let $\textbf{U}_n \in \Re^{I_n \times r_n}$, $n = 1,2,...,N$, and $\mathcal{C}  \in \Re^{r_1 \times r_2 \times... \times r_N}$.  If
		\begin{equation}\label{B4}
			\mathcal{X}_{i_1i_2...i_N} = \sum_{p_1=1}^{r_1} \sum_{p_2=1}^{r_2}... \sum_{p_N=1}^{r_N} (\textbf{U}_1)_{i_1p_1} (\textbf{U}_2)_{i_2p_2}... (\textbf{U}_N)_{i_Np_N} \mathcal{C}_{p_1p_2...p_N}
		\end{equation}
		for $i_n = 1, \ldots, I_n$ and $n = 1, \ldots, N$, then we denote $\mathcal{X}$ has a Tucker decomposition $\mathcal{X} = [[\mathcal{C};~ \textbf{U}_1, \textbf{U}_2,..., \textbf{U}_N]]$. The matrices $\textbf{U}_n$, $n = 1,2,...,N$ are called factor matrices of the Tucker decomposition, and  $\mathcal{C}$ is called the Tucker core tensor. We may also denote the Tucker decomposition as
		$
		\mathcal{X} = \mathcal{D} \times_1 \textbf{U}_1 \times_2 \textbf{U}_2 \times...\times_N \textbf{U}_N
		$.
		The Tucker ranks $r_n$, $n = 1,2,..,N$  are the smallest integers such that (\ref{B4}) holds.
	\end{defn} 
	\begin{defn}\textbf{(Triple decomposition \cite{TriB})\label{B5}} 
		Let $\mathcal{X}=(x_{ijt})\in \Re^{n_1\times n_2\times n_3}$ be a nonzero tensor, then $\mathcal{X}$ is said the triple product of a third-order horizontally square tensor $\mathcal{A}=(a_{ijt})\in \Re^{n_1\times r\times r}$, a third-order laterally square tensor $\mathcal{B}=(b_{pjs})\in \Re^{r\times n_2\times r}$, and a third-order frontally square tensor $\mathcal{C}=(b_{pqt})\in \Re^{r\times r\times n_3}$ if 
		\begin{equation}\label{B7}
			\begin{array}{l}
				\mathcal{X}_{ijt}=\sum\limits_{p=1}^r\sum\limits_{q=1}^r\sum\limits_{s=1}^ra_{iqs}b_{pjs}c_{pqt},
			\end{array}
		\end{equation} 
		for $i = 1,\dots , n_1$, $j = 1, \dots , n_2$ and  $t = 1, \dots , n_3$.	For convenience, $\mathcal{X}$ is denoted as
		\begin{equation}\label{B6}
			\begin{array}{l}
				\mathcal{X}=[\mathcal{A}\mathcal{B}\mathcal{C}],
			\end{array}
		\end{equation} 
		and $\mathcal{A}$, $\mathcal{B}$, $\mathcal{C}$ are designated as factor tensors of $\mathcal{X}$. The smallest value of $r$ such that (\ref{B7}) holds is called the triple rank of $\mathcal{X}$, and is denoted as $\mathrm{TriRank}(\mathcal{X})$.  
	\end{defn}  
	

	\section{Implicit Multiple Tensor Decomposition}\label{D}
	In this section, we first present the definition of Multiple decomposition and explore its relationship with other classical tensor decompositions. Then, we  introduce the Implicit Multiple Tensor Decomposition (IMTD), along with  its associated properties.
	
	\subsection{Multiple  decomposition}
	Before formally introducing the proposed Multiple  decomposition, we introduce the concept of generalized triple  decomposition, which is a generalization of triple decomposition to arbitrary-order tensors.
	
	\begin{defn} \textbf{(Generalized triple  decomposition)}\label{Def1}
		Let $\mathcal{X}\in \Re^{I_1\times I_2\times...\times I_N}$ be a nonzero tensor, then $\mathcal{X}$ is said the generalized triple product of  $N$-order  tensors $\mathcal{A}_1\in \Re^{I_1\times r\times...\times r}$, $\mathcal{A}_2\in \Re^{r\times I_2\times r \times...\times r}$, ..., and $\mathcal{A}_N\in \Re^{r\times r\times...\times r\times I_N}$ if 
		\begin{equation}\label{B9}
			\begin{array}{l}
				\mathcal{X}_{i_1i_2...i_N}=\sum\limits_{p_1=1}^r\sum\limits_{p_2=1}^r...\sum\limits_{p_N=1}^r (\mathcal{A}_1)_{i_1p_2...p_N}(\mathcal{A}_2)_{p_1i_2p_3...p_N}...(\mathcal{A}_N)_{p_1p_2...p_{N-1}i_N},
			\end{array}
		\end{equation} 
		for $i_n = 1, \ldots, I_n$ and $n = 1, \ldots, N$.	For convenience, $\mathcal{X}$ is denoted as
		\begin{equation}\label{B99}
			\begin{array}{l}
				\mathcal{X}=[\mathcal{A}_1\mathcal{A}_2...\mathcal{A}_N],
			\end{array}
		\end{equation} 
		and $\mathcal{A}_1$, $\mathcal{A}_2$,..., $\mathcal{A}_N$ are designated as factor tensors of $\mathcal{X}$. The smallest  $r$ such that (\ref{B9}) holds is called the generalized triple rank of $\mathcal{X}$, and is denoted as $\mathrm{GTriRank}(\mathcal{X})$. For a zero tensor, its generalized triple rank is defined as zero. 
	\end{defn}  
	
	Next, the well-definedness of Definition \ref{Def1} and an upper bound on the generalized triple rank are established in the following theorem.

	\begin{theorem}
		Generalized triple  decomposition and generalized triple rank are well defined. A  nonzero tensor $\mathcal{X}\in \Re^{I_1\times I_2\times...\times I_N}$ always has a generalized triple  decomposition (\ref{B9}), satisfying $
			\mathrm{GTriRank}(\mathcal{X})\le \mathrm{submax}\{I_1, I_2,...,I_N\}$.  
	\end{theorem}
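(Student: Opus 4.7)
My plan is to exhibit an explicit generalized triple decomposition of size $r:=\mathrm{submax}\{I_1,\dots,I_N\}$; well-definedness of both the decomposition and the rank will then follow from the well-ordering of the nonnegative integers. Let $k_0$ be any index attaining $\max\{I_1,\dots,I_N\}$; by choice of $r$ we have $I_k\le r$ for every $k\ne k_0$, and this slack is what allows each non-dominant mode to be embedded inside the bond dimension.

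Because $N\ge 3$, the set $S:=\{1,\dots,N\}\setminus\{k_0\}$ has at least two elements and therefore admits a derangement $\pi:S\to S$ (any cyclic shift works). The condition $\pi(k)\ne k$ is precisely what makes $p_{\pi(k)}$ an admissible index of $\mathcal{A}_k$, whose indices are $(p_\ell)_{\ell\ne k}$ together with $i_k$ in slot $k$. With $\pi$ fixed, I set
\begin{equation*}
(\mathcal{A}_k)_{p_1,\dots,p_{k-1},i_k,p_{k+1},\dots,p_N}=\delta_{p_{\pi(k)},\,i_k},\qquad k\in S,
\end{equation*}
constant in every other $p$-index, and take $\mathcal{A}_{k_0}$ to be a rescaled, index-permuted, zero-padded copy of $\mathcal{X}$:
\begin{equation*}
(\mathcal{A}_{k_0})_{p_1,\dots,p_{k_0-1},i_{k_0},p_{k_0+1},\dots,p_N}=\tfrac{1}{r}\,\mathcal{X}_{j_1 j_2\cdots j_N},
\end{equation*}
where $j_{k_0}=i_{k_0}$, $j_k=p_{\pi(k)}$ for $k\in S$ provided $p_{\pi(k)}\le I_k$ for every such $k$, and zero otherwise. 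The bound $I_k\le r$ ensures that this definition is self-consistent.

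Substituting into (\ref{B9}), the product $\prod_{k\in S}\delta_{p_{\pi(k)},i_k}$ reindexes, via the bijection $\pi$, as $\prod_{j\in S}\delta_{p_j,\,i_{\pi^{-1}(j)}}$, collapsing the sum over $\{p_j:j\in S\}$ to the single assignment $p_j=i_{\pi^{-1}(j)}$; the remaining index $p_{k_0}$ is unconstrained and contributes a factor $r$ that cancels the $\tfrac{1}{r}$ prefactor. Under this collapse each $j_k=p_{\pi(k)}$ becomes $i_{\pi^{-1}(\pi(k))}=i_k$, so the full expression reduces exactly to $\mathcal{X}_{i_1\cdots i_N}$. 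This yields $\mathrm{GTriRank}(\mathcal{X})\le r$; well-definedness of the rank follows because the set of integers admitting a decomposition (\ref{B9}) is then a nonempty subset of the nonnegative integers, and so has a minimum.

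I expect the main obstacle to be purely combinatorial bookkeeping: verifying that $\pi(k)\ne k$ is exactly what makes $p_{\pi(k)}$ an available index of $\mathcal{A}_k$, that the inverse permutation $\pi^{-1}$ is the one that surfaces when converting the product of Kronecker deltas into a global assignment on the bond indices, and that zero-padding $\mathcal{A}_{k_0}$ is forced precisely because some $I_k$ may be strictly smaller than $r$. Beyond these index checks no analytic or algebraic difficulties arise; the proof is entirely constructive.
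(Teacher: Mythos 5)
Your proof is correct. Both you and the paper follow the same template: set $r=\mathrm{submax}\{I_1,\dots,I_N\}$, store all of the data in the factor attached to a mode of maximal length, turn the other $N-1$ factors into Kronecker-delta selectors, and then get well-definedness of the rank from nonemptiness of the set of admissible $r$. The substantive difference is how the selectors are wired. The paper uses multi-index symbols such as $\delta_{i_2p_3\cdots p_N}$, equal to $1$ only when \emph{all} subscripts coincide, and pins the leftover bond index by setting $p_1=1$. That works for $N=3$, where each such delta has only two subscripts, but for $N\ge 4$ the product of those deltas forces $i_2=\cdots=i_N$, so the displayed identity $\sum x_{i_1p_N\cdots p_2}\,\delta_{i_2p_3\cdots p_N}\cdots\delta_{p_2\cdots p_{N-1}i_N}=\mathcal{X}_{i_1\cdots i_N}$ does not hold off the diagonal. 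Your pairwise deltas $\delta_{p_{\pi(k)},i_k}$, routed by a derangement $\pi$ of $S=\{1,\dots,N\}\setminus\{k_0\}$, pin each bond index $p_j$ ($j\in S$) to exactly one mode index $i_{\pi^{-1}(j)}$, so the sum collapses correctly for every $N\ge 3$; the free index $p_{k_0}$ contributes a factor $r$ that your $1/r$ normalization cancels (pinning $p_{k_0}=1$, as the paper does, would work equally well and avoid the rescaling). Your verifications that $I_k\le r$ for all $k\ne k_0$ and that $|S|\ge 2$ guarantees a derangement are precisely the points that need checking. In short: same strategy as the paper, but your realization of the selector tensors is the one that actually establishes the bound for all orders rather than only $N=3$.
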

	\begin{proof}
	Without loss of generality, we may assume that we have a  nonzero tensor $\mathcal{X}\in \Re^{I_1\times I_2\times...\times I_N}$ and $I_1 \geq I_2\geq... \geq I_N \geq 1$. Thus, $\mathrm{submax}\{I_1, I_2,..., I_N\} = I_2$. Let $r = I_2$. Define tensors $\mathcal{A}_1 \in \Re^{I_1 \times r \times...\times r}$, $\mathcal{A}_2 \in \Re^{r \times I_2 \times...\times r}$,..., and $\mathcal{A}_N \in \Re^{r \times r\times... \times I_N}$,  such that 
	$(\mathcal{A}_1)_{i_1p_2...p_N} = x_{i_1p_Np_{N-1}...p_3p_2}$ for $ p_3\le I_3, p_4\le I_4,...,p_N\le I_N $ and 
	$(\mathcal{A}_1)_{i_1p_2...p_N} = 0$  otherwise,
	$(\mathcal{A}_2)_{1i_2...p_N} = \delta_{i_2p_3...p_N}$ and $(\mathcal{A}_2)_{p_1i_2...p_N} = 0$ for $p_1 > 1$, ...,  $(\mathcal{A}_N)_{1p_2...i_N} = \delta_{p_2...p_{N-1}i_N}$ and $(\mathcal{A}_N)_{p_1p_2...i_N} = 0$ for $p_1 > 1$,  where $\delta_{p_2...p_{N-1}i_N}$,..., and $\delta_{p_2...p_{N-1}i_N}$ are the Kronecker symbol such that $\delta=1$  when all  subscripts are equal, and $0$ otherwise. Then, we get
	\begin{eqnarray*}
		&\sum\limits_{p_1=1}^r\sum\limits_{p_2=1}^r...\sum\limits_{p_N=1}^r(\mathcal{A}_1)_{i_1p_2...p_N}(\mathcal{A}_2)_{p_1i_2...p_N}...(\mathcal{A}_N)_{p_1p_2...i_N} \\
		= &\sum_{p_2=1}^{r}... \sum_{p_N=1}^{r} x_{i_1p_N...p_2} \delta_{i_2p_{3}...p_N}... \delta_{p_2...p_{N-1}i_N} = \mathcal{X}_{i_1i_2...i_N},
	\end{eqnarray*}
	for $i_n = 1, \ldots, I_n$ and $n = 1, \ldots, N$, i.e., (3.1) holds for the above choices of factor tensors $\mathcal{A}_1$, $\mathcal{A}_2$,..., and $\mathcal{A}_N$. Thus, generalized triple decomposition always exists with GTriRank($\mathcal{X}$)$ \leq r:=I_2=\mathrm{submax}\{I_1, I_2,...,I_N\}$.
\end{proof}

	Although the generalized triple decomposition is well-defined, it requires the short dimensions of the factor tensors to be identical, which limits its flexibility and applicability. Therefore, we proceed to introduce the proposed Multiple  decomposition.

	\begin{defn} \textbf{(Multiple decomposition)}
		Let  tensor $\mathcal{X}\in \Re^{I_1\times I_2\times...\times I_N}$ be a nonzero tensor, then $\mathcal{X}$ is said the  Multiple product of  $N$-order  tensors $\mathcal{A}_1\in \Re^{I_1\times r_2\times...\times r_N}$, $\mathcal{A}_2\in \Re^{r_1\times I_2\times r_3\times...\times r_N}$, ..., and $\mathcal{A}_N\in \Re^{r_1\times r_2\times...\times r_{N-1}\times I_N}$ if 
		\begin{equation}\label{B_9}
			\begin{array}{l}
				\mathcal{X}_{i_1i_2...i_N}=\sum\limits_{p_1=1}^{r_1}\sum\limits_{p_2=1}^{r_2}...\sum\limits_{p_N=1}^{r_N} (\mathcal{A}_1)_{i_1p_2...p_N}(\mathcal{A}_2)_{p_1i_2p_3...p_N}...(\mathcal{A}_N)_{p_1p_2...i_{N-1}i_N},
			\end{array}
		\end{equation} 
		for $i_n = 1, \ldots, I_n$ and $n = 1, \ldots, N$.	For convenience, we still use  (\ref{B99}) to represent $\mathcal{X}$, i.e., $
		\mathcal{X}=[\mathcal{A}_1\mathcal{A}_2...\mathcal{A}_N] $,
		and $\mathcal{A}_1$, $\mathcal{A}_2$, ..., $\mathcal{A}_N$ are designated as  factor tensors of $\mathcal{X}$. 
		If the vector $\textbf{r} := (r_1, r_2, \ldots, r_N)$ satisfies the following conditions:
		\begin{enumerate}
			\item $r_i \leq \mathrm{GTriRank}(\mathcal{X})$ for all $i = 1, 2, \ldots, N$; 
			
			\item $\textbf{r}$ has the smallest $\ell_1$ norm among all such vectors satisfying (\ref{B_9}). In the case of Multiple vectors sharing the same minimal $\ell_1$ norm, $\textbf{r}$ is selected to be the one with the smallest lexicographical order;
		\end{enumerate}
		then we refer to  $\textbf{r}$ as the \emph{ Multiple rank} of $\mathcal{X}$, and denote it by $\mathrm{MtpRank}(\mathcal{X})$.
	\end{defn}
	
	The  Multiple rank of tensor $\mathcal{X}$ always exists, since we can at least choose $ r_i = {\rm GTriRank(\mathcal{X})} $ for $i = 1,2,...,N$. Note that if the vector $(r_1, r_2, \ldots, r_N)$ satisfies (\ref{B_9}), then any vector $(s_1, s_2, \ldots, s_N)$ with $s_i \ge r_i$ can also satisfy equation (\ref{B_9}), simply by zero-padding the corresponding factor tensors. Therefore,  condition 2 is  imposed to ensure the uniqueness of the Multiple rank. 	The following theorem establishes the relationship between GTriRank($\mathcal{X}$) and MtpRank($\mathcal{X}$).
    \newpage
	\begin{theorem}\label{T11}
		If $\mathcal{X}\in \mathbb{R}^{I_1\times I_2\times\cdots\times I_N}$ is a nonzero tensor with $\mathrm{MtpRank}(\mathcal{X})=(r_1, r_2, \ldots, r_N)$, then $\mathrm{GTriRank}(\mathcal{X})=\max\{r_1,r_2,\ldots,r_N\}$.
	\end{theorem}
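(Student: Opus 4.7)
My plan is to prove the equality by establishing two inequalities. The first, $\max\{r_1,\ldots,r_N\} \le \mathrm{GTriRank}(\mathcal{X})$, is immediate from the definition of the Multiple rank, while the reverse inequality will follow from a zero-padding construction that converts any Multiple decomposition of $\mathcal{X}$ into a generalized triple decomposition whose short dimension equals $\max_i r_i$.

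For the easy direction, I would simply invoke condition~1 in the definition of $\mathrm{MtpRank}(\mathcal{X})$, which requires $r_i \le \mathrm{GTriRank}(\mathcal{X})$ for every $i$; taking the maximum over $i$ yields the desired bound at once.

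The substantive step is the reverse inequality. Set $r^{\ast} := \max_i r_i$ and let $\mathcal{X} = [\mathcal{A}_1\mathcal{A}_2\cdots\mathcal{A}_N]$ be any Multiple decomposition realizing the rank vector $(r_1,\ldots,r_N)$. For each $n$ I would embed $\mathcal{A}_n \in \Re^{r_1 \times \cdots \times r_{n-1} \times I_n \times r_{n+1} \times \cdots \times r_N}$ into a larger tensor $\widetilde{\mathcal{A}}_n$ of size $r^{\ast} \times \cdots \times r^{\ast} \times I_n \times r^{\ast} \times \cdots \times r^{\ast}$ by zero-padding every short mode from $r_j$ up to $r^{\ast}$. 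Substituting the $\widetilde{\mathcal{A}}_n$ into the defining sum with each inner summation now running to $r^{\ast}$, I would observe that for any index tuple $(p_1,\ldots,p_N)$ containing some $p_j > r_j$, every factor $\widetilde{\mathcal{A}}_n$ with $n \ne j$ was padded in its mode $j$ and therefore evaluates to zero at that coordinate, so the entire product vanishes. Only tuples with $p_j \le r_j$ for all $j$ survive, reducing the $r^{\ast}$-fold sum back to the original Multiple sum and reproducing $\mathcal{X}_{i_1\cdots i_N}$. This exhibits a bona fide generalized triple decomposition with short dimension $r^{\ast}$, hence $\mathrm{GTriRank}(\mathcal{X}) \le r^{\ast}$.

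I do not anticipate a genuine obstacle beyond careful index bookkeeping. The main point requiring attention is the observation that each short index $p_j$ appears in precisely those factor tensors $\mathcal{A}_n$ with $n \ne j$, so padding mode $j$ of every such factor simultaneously zeros out any term in which $p_j$ exceeds $r_j$; once this is recorded cleanly, combining the two inequalities yields the stated equality.
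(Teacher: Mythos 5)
Your proposal is correct and follows essentially the same route as the paper's own proof: zero-padding the factor tensors of a rank-realizing Multiple decomposition up to $r^{\ast}=\max_i r_i$ to obtain a generalized triple decomposition (giving $\mathrm{GTriRank}(\mathcal{X})\le r^{\ast}$), and invoking condition~1 of the Multiple-rank definition for the reverse inequality. Your explicit justification that any term with some $p_j>r_j$ vanishes is a slightly more careful spelling-out of a step the paper leaves implicit, but the argument is the same.
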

	\begin{proof}
		Due to  MtpRank($\mathcal{X}$)= $(r_1,r_2,...,r_N)$, there exists $\mathcal{A}_1\in \Re^{I_1\times r_2\times...\times r_N}$, $\mathcal{A}_2\in \Re^{r_1\times I_2\times...\times r_N}$, ..., and $\mathcal{A}_N\in \Re^{r_1\times r_2\times...\times r_{N-1}\times I_N}$, satisfying $\mathcal{X}=[\mathcal{A}_1\mathcal{A}_2...\mathcal{A}_N]$. Let $r:=\max\{r_1,r_2,...,r_N\}$. 
		We expand $\mathcal{A}_1$, $\mathcal{A}_2$, \ldots, and $\mathcal{A}_N$ to $\widehat{\mathcal{A}}_1 \in \mathbb{R}^{I_1 \times r \times \cdots \times r}$, $\widehat{\mathcal{A}}_2 \in \mathbb{R}^{r \times I_2 \times \cdots \times r}$, \ldots, and $\widehat{\mathcal{A}}_N \in \mathbb{R}^{r \times r \times \cdots \times r \times I_N}$ by zero-padding. Then, we have 
		\begin{equation*}
			\begin{aligned}
				&\quad~\sum\limits_{p_1=1}^r\sum\limits_{p_2=1}^r...\sum\limits_{p_N=1}^r(\widehat{\mathcal{A}}_1)_{i_1p_2...p_N}(\widehat{\mathcal{A}}_2)_{p_1i_2...p_N}...(\widehat{\mathcal{A}}_N)_{p_1p_2...i_N} \\&=\sum\limits_{p_1=1}^{r_1}\sum\limits_{p_2=1}^{r_2}...\sum\limits_{p_N=1}^{r_N} (\mathcal{A}_1)_{i_1p_2...p_N}(\mathcal{A}_2)_{p_1i_2...p_N}...(\mathcal{A}_N)_{p_1p_2...i_N} = \mathcal{X}_{i_1i_2...i_N}.	
			\end{aligned}
		\end{equation*}
		Therefore, we also have $\mathcal{X}=[\widehat{\mathcal{A}}_1\widehat{\mathcal{A}}_2...\widehat{\mathcal{A}}_N]$, which implies ${\rm GTriRank(\mathcal{X})}\le r$. According to the definition of  Multiple rank, we  obtain ${\rm GTriRank(\mathcal{X})}\ge r $, which means ${\rm GTriRank(\mathcal{X})}=\max\{r_1,r_2,...,r_N\}$.
	\end{proof}
	
	\begin{remark}
		Theorem~\ref{T11} implies that the requirement of equal auxiliary dimensions in the factor tensors may introduce redundancy, as they are uniformly set to $\max\{r_1, \ldots, r_N\}$. 
		Therefore, Multiple decomposition, by allowing flexible rank configurations, can achieve a higher compression ratio and improved efficiency.
	\end{remark}

	Next, we introduce some operational properties of the Multiple product.
	\begin{theorem}\label{T22}
		Suppose that tensor $\mathcal{A}_1\in \Re^{I_1\times r_2\times...\times r_N}$, $\mathcal{A}_2\in \Re^{r_1\times I_2\times...\times r_N}$, ...,  $\mathcal{A}_N\in \Re^{r_1\times r_2\times...\times r_{N-1}\times I_N}$ and $\widehat{\mathcal{A}}_1\in \Re^{I_1\times r_2\times...\times r_N}$, $\widehat{\mathcal{A}}_2\in \Re^{r_1\times I_2\times...\times r_N}$, ...,  $\widehat{\mathcal{A}}_N\in \Re^{r_1\times r_2\times...\times I_N}$ are N-order  tensors, then the following properties hold:
		\begin{enumerate}
			\item $[(\mathcal{A}_1-\widehat{\mathcal{A}}_1)\mathcal{A}_2...\mathcal{A}_N]=[\mathcal{A}_1\mathcal{A}_2...\mathcal{A}_N]-[\widehat{\mathcal{A}}_1\mathcal{A}_2...\mathcal{A}_N]$,
			
			$[\mathcal{A}_1(\mathcal{A}_2-\widehat{\mathcal{A}}_2)...\mathcal{A}_N]=[\mathcal{A}_1\mathcal{A}_2...\mathcal{A}_N]-[\mathcal{A}_1\widehat{\mathcal{A}}_2...\mathcal{A}_N]$,
			
			\qquad ...
			
			$[\mathcal{A}_1\mathcal{A}_2...(\mathcal{A}_N-\widehat{\mathcal{A}}_N)]=[\mathcal{A}_1\mathcal{A}_2...\mathcal{A}_N]-[\mathcal{A}_1\mathcal{A}_2...\widehat{\mathcal{A}}_N]$.
			
			\item If $\mathcal{X}=[\mathcal{A}_1\mathcal{A}_2...\mathcal{A}_N]$, then we have 
			$$
			|\mathcal{X}_{i_1i_2...i_N}|\le \Vert \mathcal{A}_1(i_1,:,...,:) \Vert_{\ell_1}\Vert \mathcal{A}_2(:,i_2,...,:) \Vert_{\ell_1}...\Vert \mathcal{A}_N(:,:,...,i_N) \Vert_{\ell_1}.
			$$
			
			\item If $\mathcal{X}=[\mathcal{A}_1\mathcal{A}_2...\mathcal{A}_N]$, then for $n = 1,2,...,N$, one has 
			$$
			\mathcal{X}_{(n)} = (\mathcal{A}_n)_{(n)}\mathcal{E}_{n[N-1]}^\top,
			$$
			where $\mathcal{E}_n$ is a $(2N-2)$-order tensor with size $I_1\times...\times I_{n-1}\times I_{n+1}\times...\times I_N\times r_1 \times \ldots \times r_{n-1}\times r_{n+1}\times...\times r_{N}$, and
			$
			{(\mathcal{E}_n)_{i_1...i_{n-1}i_{n+1}...i_Np_1...p_{n-1}p_{n+1}...p_N}}=\sum\limits_{p_n=1}^{r_n}(\mathcal{A}_1)_{i_1p_2...p_N}...(\mathcal{A}_{n-1})_{p_1...i_{n-1}p_n...p_N}$ $(\mathcal{A}_{n+1})_{p_1...p_{n}i_{n+1}...p_N}...(\mathcal{A}_N)_{p_1p_2...i_N}$.
			
			The $\mathcal{E}_{n[N-1]}$  is a  matrix by collapsing the first $N-1$ dimensions of $\mathcal{E}$ and reshaping the remaining dimensions into a single dimension. 
		\end{enumerate}
	\end{theorem}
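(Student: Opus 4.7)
The plan is to derive all three items directly from the elementwise definition (\ref{B_9}) of the Multiple product, as no additional machinery is needed beyond that formula.

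For Item 1, I would substitute $\mathcal{A}_n-\widehat{\mathcal{A}}_n$ into the $n$-th slot of (\ref{B_9}) and use the fact that the $N$-fold sum is entrywise linear in each factor tensor. The sum then immediately splits into the difference of the two Multiple products in which only the $n$-th factor is altered, giving the claimed distributivity; repeating this argument for each $n$ yields every identity in the block.

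For Item 2, I would take absolute values in (\ref{B_9}) and apply the triangle inequality inside the $N$-fold sum to obtain
\[
|\mathcal{X}_{i_1\cdots i_N}|\le \sum_{p_1=1}^{r_1}\cdots\sum_{p_N=1}^{r_N}\prod_{k=1}^N \bigl|(\mathcal{A}_k)_{p_1\cdots p_{k-1}\,i_k\,p_{k+1}\cdots p_N}\bigr|.
\]
The product of slice $\ell_1$ norms on the right-hand side of the claim expands as $\prod_{k=1}^N \sum_{(p_j)_{j\ne k}}|(\mathcal{A}_k)_{\cdots}|$, in which each factor tensor carries its own \emph{independent} copy of the indices $(p_j)_{j\ne k}$. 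The bound above corresponds to the diagonal subset in which these copies are forced to coincide across $k$, while all other terms in the expansion are non-negative. This simple counting immediately yields the desired inequality.

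For Item 3, the idea is to isolate $\mathcal{A}_n$ in (\ref{B_9}). Since $(\mathcal{A}_n)_{p_1\cdots p_{n-1}\,i_n\,p_{n+1}\cdots p_N}$ does not depend on $p_n$, I would pull it out of the sum over $p_n$ and rewrite (\ref{B_9}) as
\[
\mathcal{X}_{i_1\cdots i_N}=\sum_{p_1,\ldots,p_{n-1},p_{n+1},\ldots,p_N} (\mathcal{A}_n)_{p_1\cdots p_{n-1}\,i_n\,p_{n+1}\cdots p_N}\Bigl[\sum_{p_n}\prod_{k\ne n}(\mathcal{A}_k)_{\cdots}\Bigr].
\]
The bracketed factor is precisely the entry $(\mathcal{E}_n)_{i_1\cdots i_{n-1}\,i_{n+1}\cdots i_N\,p_1\cdots p_{n-1}\,p_{n+1}\cdots p_N}$ by the definition of $\mathcal{E}_n$. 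Reading the resulting identity as a matrix product between $(\mathcal{A}_n)_{(n)}$, whose rows are indexed by $i_n$ and columns by a vectorization of $(p_j)_{j\ne n}$, and $\mathcal{E}_{n[N-1]}^\top$, whose rows are indexed by $(p_j)_{j\ne n}$ and columns by $(i_j)_{j\ne n}$, directly recovers the claimed formula $\mathcal{X}_{(n)}=(\mathcal{A}_n)_{(n)}\mathcal{E}_{n[N-1]}^\top$.

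The main obstacle will be the index bookkeeping in Item 3: one must commit to a single vectorization convention for the multi-indices $(p_j)_{j\ne n}$ and $(i_j)_{j\ne n}$, and apply it consistently in the three unfoldings $\mathcal{X}_{(n)}$, $(\mathcal{A}_n)_{(n)}$, and $\mathcal{E}_{n[N-1]}$. Once that convention is fixed, Item 3 is just a rewriting of (\ref{B_9}), and Items 1 and 2 are essentially routine consequences of the linearity and entrywise structure of the Multiple product.
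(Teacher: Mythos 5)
Your proposal is correct and follows essentially the same route as the paper: all three items are derived directly from the elementwise definition of the Multiple product, with Item 3 obtained by pulling $(\mathcal{A}_n)_{p_1\cdots i_n\cdots p_N}$ out of the sum over $p_n$ and reading the result as the matrix product $(\mathcal{A}_n)_{(n)}\mathcal{E}_{n[N-1]}^\top$, exactly as the paper does for the mode-$1$ case. If anything, you give more detail than the paper for Item 2 (the paper dismisses Items 1 and 2 as immediate), and your diagonal-subset argument for why the coupled sum is dominated by the product of slice $\ell_1$ norms is a valid way to fill that gap.
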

	\begin{proof}
	Properties 1 and 2 follow directly from the definition of multiple product. We focus on proving Property 3.  
	To clarify the procedure, we present the mode-1 unfolding of the tensor as an illustrative example, noting that the unfoldings along other dimensions follow analogously:
	\begin{equation}
		\begin{aligned}
			\mathcal{X}_{i_1i_2...i_N}&=\sum\limits_{p_1=1}^{r_1}\sum\limits_{p_2=1}^{r_2}...\sum\limits_{p_N=1}^{r_N} (\mathcal{A}_1)_{i_1p_2...p_N}(\mathcal{A}_2)_{p_1i_2...p_N}...(\mathcal{A}_N)_{p_1p_2...i_N}\\
			& =\sum\limits_{p_2=1}^{r_2}...\sum\limits_{p_N=1}^{r_N} (\mathcal{A}_1)_{i_1p_2...p_N}(\mathcal{A}_2)_{1i_2...p_N}...(\mathcal{A}_N)_{1p_2...i_N} \\
			&+\sum\limits_{p_2=1}^{r_2}...\sum\limits_{p_N=1}^{r_N} (\mathcal{A}_1)_{i_1p_2...p_N}(\mathcal{A}_2)_{2i_2...p_N}...(\mathcal{A}_N)_{2p_2...i_N} \\
			& \qquad...\\
			&+\sum\limits_{p_2=1}^{r_2}...\sum\limits_{p_N=1}^{r_N} (\mathcal{A}_1)_{i_1p_2...p_N}(\mathcal{A}_2)_{r_1i_2...p_N}...(\mathcal{A}_N)_{r_1p_2...i_N}.
		\end{aligned}
	\end{equation}
	Notice that each term shares the common factor $(\mathcal{A}_1)_{i_1p_2...p_N}$. Therefore, one has
	\begin{equation}\label{E3}
		\begin{aligned}
			\mathcal{X}_{i_1i_2...i_N}=\sum\limits_{p_2=1}^{r_2}...\sum\limits_{p_N=1}^{r_N}[ (\mathcal{A}_1)_{i_1p_2...p_N}\textbf{e}_{i_2...i_Np_2...p_N}],
		\end{aligned}
	\end{equation}
	where $\textbf{e}_{i_2...i_Np_2...p_N}:=\sum\limits_{p_1=1}^{r_1}(\mathcal{A}_2)_{p_1i_2p_3...p_N}...(\mathcal{A}_N)_{p_1p_2...p_{N-1}i_N}$. We can rewrite (\ref{E3}) as:
	\begin{equation}\label{E4}
		\begin{aligned}
			\mathcal{X}_{i_1i_2...i_N}=\sum\limits_{k=1}^{K}[ (\mathcal{A}_1)_{i_1k}\textbf{e}_{i_2...i_Nk}],\quad \text{with}\quad  K = r_2r_3...r_N.
		\end{aligned}
	\end{equation}
	The right-hand side of equation (\ref{E4}) can be viewed as a matrix multiplication:
	\begin{equation}\label{E5}
		\begin{aligned}
			\mathcal{X}_{i_1i_2...i_N}= [(\mathcal{A}_1)_{(1)}\mathcal{E}_{[N-1]}^\top]_{i_1(i_2...i_N)}.
		\end{aligned}
	\end{equation}
	Since $\mathcal{X}_{i_1i_2...i_N}$ corresponds to $(\mathcal{X}_{(1)})_{i_1(i_2...i_N)}$, we can obtain
	$$
	\mathcal{X}_{(1)}=(\mathcal{A}_1)_{(1)}\mathcal{E}_{[N-1]}^\top.
	$$
	The computation of other mode unfoldings can be derived similarly to that of the first mode, which completes the proof.
\end{proof}
	
	\begin{remark}
		Property 1 establishes that the Multiple product satisfies the distributive law, while Property 2 provides a useful upper bound for this operation. Both  follow directly from the definition of  Multiple product.  Property 3 presents a computational formula for unfolding the Multiple product of factor tensors across different modes, which can be used to solve the Multiple decomposition by iteratively updating the factor tensors within the block coordinate descent (BCD) framework.
	\end{remark}

	\subsubsection{The relation between the Multiple  and CP decompositions}
	In fact,  Multiple decomposition can be  viewed as an extension of  CP decomposition, where each factor matrix in  CP decomposition is generalized to a factor tensor in Multiple decomposition, as illustrated in Figure \ref{CF1}. 
	\begin{figure}
		\centering
		\subfloat{\includegraphics[width=6.2cm]{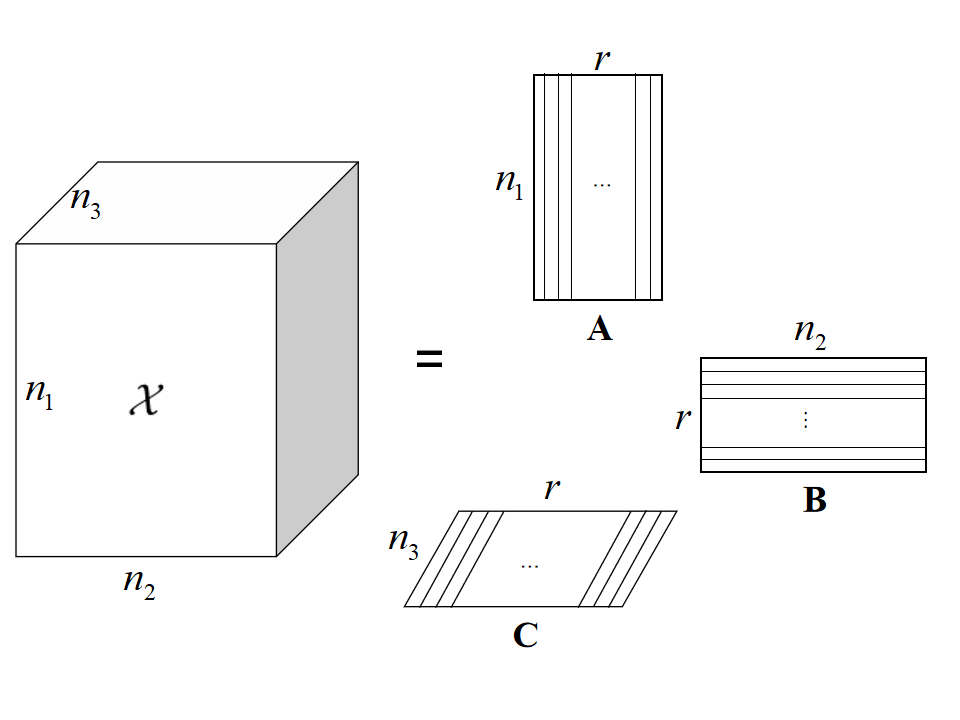}}\quad
		\subfloat{\includegraphics[width=6.2cm]{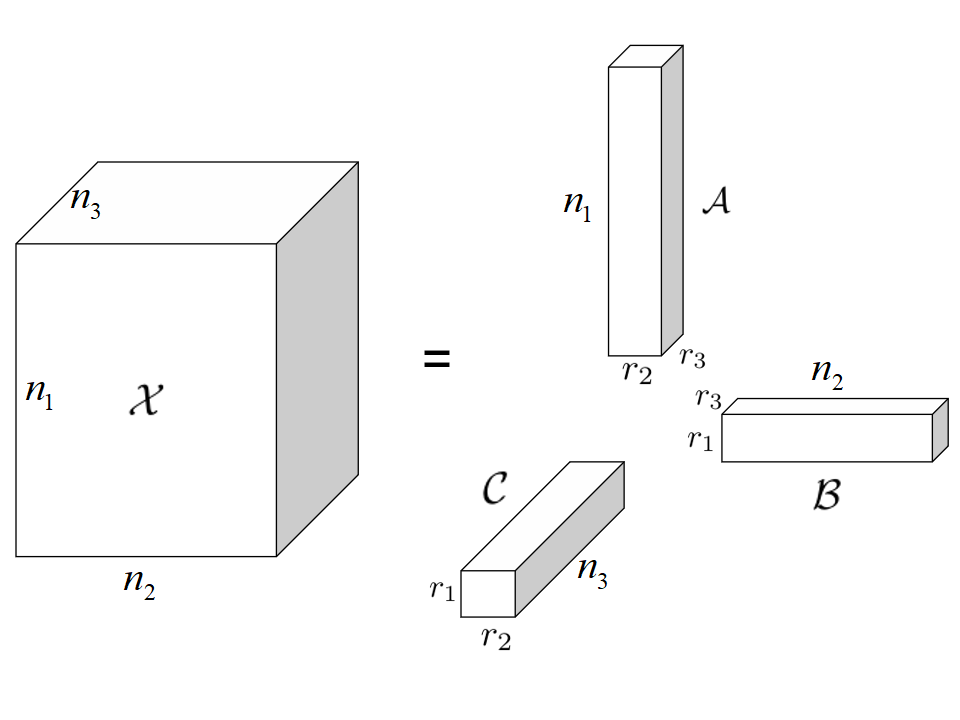}}
		
		\caption{ \small An intuitive comparison between CP decomposition (left) and Multiple decomposition (right) for a third-order tensor.   }
		\label{CF1}
	\end{figure}
	The formal relationship can be established in the following theorem.
	\begin{theorem}\label{T23}
		For $\mathcal{X}\in \Re^{I_1\times I_2\times...\times I_N}$ with MtpRank($\mathcal{X}$)=$(r_1, r_2,...,r_N)$, we have
		\begin{eqnarray}
			\max\{r_1,r_2,...,r_N\}\le {\rm CPRank(\mathcal{X})} \le r_1r_2...r_N. \label{D3}
		\end{eqnarray}
	\end{theorem}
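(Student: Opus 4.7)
The plan is to prove the two inequalities separately, using Theorem~\ref{T11} to translate the left inequality into a statement about the generalized triple rank and a direct construction for the right inequality.

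For the upper bound, I would work directly from the Multiple decomposition $\mathcal{X}=[\mathcal{A}_1\mathcal{A}_2\cdots\mathcal{A}_N]$ realizing the Multiple rank $(r_1,\ldots,r_N)$. The key observation is that for each fixed index tuple $(p_1,\ldots,p_N)\in\prod_{n=1}^N\{1,\ldots,r_n\}$, if I define the vectors
\[
\textbf{a}^{(p_1,\ldots,p_N)}_n(i_n)\;:=\;(\mathcal{A}_n)_{p_1\ldots p_{n-1}\,i_n\,p_{n+1}\ldots p_N},\qquad n=1,\ldots,N,
\]
then the summand inside (\ref{B_9}) is precisely the rank-one tensor $\textbf{a}^{(p_1,\ldots,p_N)}_1\circ\textbf{a}^{(p_1,\ldots,p_N)}_2\circ\cdots\circ\textbf{a}^{(p_1,\ldots,p_N)}_N$. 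Summing over all $(p_1,\ldots,p_N)$ exhibits $\mathcal{X}$ as a sum of $r_1 r_2\cdots r_N$ rank-one tensors, so $\mathrm{CPRank}(\mathcal{X})\le r_1 r_2\cdots r_N$.

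For the lower bound, by Theorem~\ref{T11} we have $\max\{r_1,\ldots,r_N\}=\mathrm{GTriRank}(\mathcal{X})$, so it suffices to show $\mathrm{GTriRank}(\mathcal{X})\le\mathrm{CPRank}(\mathcal{X})$. Starting from a CP decomposition $\mathcal{X}_{i_1\ldots i_N}=\sum_{p=1}^{R}(\textbf{A}_1)_{i_1 p}\cdots(\textbf{A}_N)_{i_N p}$ with $R=\mathrm{CPRank}(\mathcal{X})$, I would construct generalized triple factors of short dimension $r=R$ by placing the CP columns on the ``super-diagonal'' of each factor tensor: for each $n$, set
\[
(\mathcal{A}_n)_{p_1\ldots p_{n-1}\,i_n\,p_{n+1}\ldots p_N}\;=\;\begin{cases}(\textbf{A}_n)_{i_n p}&\text{if }p_1=\cdots=p_{n-1}=p_{n+1}=\cdots=p_N=p,\\ 0&\text{otherwise.}\end{cases}
\]
Substituting into (\ref{B9}), the only surviving terms in the multi-sum are those where every auxiliary index equals a common value $p\in\{1,\ldots,R\}$, since the $n$-th factor forces all its $p$-indices to be equal, and these constraints across $n$ collectively force $p_1=p_2=\cdots=p_N$. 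The resulting sum collapses to the original CP expansion, yielding a valid generalized triple decomposition with $r=R$, hence $\mathrm{GTriRank}(\mathcal{X})\le R$.

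The main obstacle is keeping the index bookkeeping correct in the super-diagonal construction, since each factor $\mathcal{A}_n$ carries $N-1$ auxiliary indices and one has to verify that the union of the constraints ``all auxiliary indices of $\mathcal{A}_n$ agree, for every $n$'' really does force a single common value across the entire multi-sum; once that reduction is made, the rest is a clean substitution.
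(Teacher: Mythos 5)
Your proposal is correct and follows essentially the same route as the paper's own proof: the upper bound by reading the Multiple decomposition as a sum of $r_1r_2\cdots r_N$ rank-one tensors, and the lower bound by the super-diagonal embedding of the CP factor matrices into generalized triple factor tensors (which, for $N\ge 3$, does force all auxiliary indices to coincide) combined with Theorem~\ref{T11}. No substantive differences to report.
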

	\begin{proof}
		We denote $r:={\rm CPRank(\mathcal{X})}$. Suppose  $\mathcal{X} = [[\textbf{A}_1, \textbf{A}_2, ..., \textbf{A}_N]]$ is a CP decomposition of tensor $\mathcal{X}$, where $\textbf{A}_1 = (a^{(1)}_{i_1p}) \in \Re^{I_1 \times r}$, $\textbf{A}_2 = (a^{(2)}_{i_2p}) \in \Re^{I_2 \times r}$,..., and $\textbf{A}_N = (a^{(N)}_{i_np}) \in \Re^{I_N \times r}$ are factor matrices. Denote $\mathcal{A}_1  \in \Re^{I_1 \times r\times... \times r}$, $\mathcal{A}_2  \in \Re^{r \times I_2 \times...\times r}$, ... , and $\mathcal{A}_N  \in \Re^{r \times r \times...\times I_N}$ with
		$(\mathcal{A}_1)_{i_1p_2...p_N} = a^{(1)}_{i_1p_2}$ for $p_2=p_3=...=p_N$ and $(\mathcal{A}_1)_{i_1p_2...p_N} = 0$ otherwise;
		$(\mathcal{A}_2)_{p_1i_2...p_N} = a^{(2)}_{i_2p_1}$ for $ p_1=p_3=...=p_N$ and $(\mathcal{A}_2)_{p_1i_2...p_N} = 0$ otherwise; ... ; $
		(\mathcal{A}_N)_{p_1p_2...I_N} = 	a^{(N)}_{i_Np_1} $ for $ p_1=p_2=...=p_{N-1}$.
		Then for all $i_n = 1, \ldots, I_n$ and $n = 1, \ldots, N$, there holds
		\begin{equation*}
			\begin{aligned}
				[\mathcal{A}_1\mathcal{A}_2...\mathcal{A}_N]_{i_1i_2...i_N} &= \sum\limits_{p_1=1}^r\sum\limits_{p_2=1}^r...\sum\limits_{p_N=1}^r(\mathcal{A}_1)_{i_1p_2...p_N}(\mathcal{A}_2)_{p_1i_2...p_N}...(\mathcal{A}_N)_{p_1p_2...i_N} \\
				&= \sum_{p=1}^{r} a^{(1)}_{i_1p} a^{(2)}_{i_2p}... a^{(N)}_{i_Np} = \mathcal{X}_{i_1i_2...i_N}.
			\end{aligned}
		\end{equation*}
		This means $\mathcal{X} = [\mathcal{A}_1\mathcal{A}_2...\mathcal{A}_N]$ and $\mathrm{GTriRank}(\mathcal{X}) \leq \mathrm{CPRank}(\mathcal{X})$. According to Theorem \ref{T11}, we have $\max\{r_1,r_2,...,r_N\}\le {\rm CPRank(\mathcal{X})}$.
		On the other hand, since MtpRank($\mathcal{X}$)=$(r_1, r_2,...,r_N)$, There exists $\mathcal{B}_1\in \Re^{I_1\times r_2\times...\times r_N}$, $\mathcal{B}_2\in \Re^{r_1\times I_2\times...\times r_N}$, ..., and  $\mathcal{B}_N\in \Re^{r_1\times r_2\times...\times r_{N-1}\times I_N}$, s.t.
		\begin{equation*}
			\mathcal{X}_{i_1i_2...i_N} = \sum_{p_1=1}^{r_1} \sum_{p_2=1}^{r_2}... \sum_{p_N=1}^{r_N} (\mathcal{B}_1)_{i_1p_2...p_N} (\mathcal{B}_2)_{p_1i_2...p_N}... (\mathcal{B}_N)_{p_1p_2...i_N}.
		\end{equation*} 
		Therefore, $\mathcal{X}$ can be represented as a sum of $r_1r_2...r_N$ rank-one tensors, and the last inequality in the theorem holds.
	\end{proof} 
	
	The second inequality in (\ref{D3}) can hold with equality (see Example 3.5 in \cite{TriB}), in which case the CP rank can be much larger than the Multiple rank.

	\subsubsection{The relation between the Multiple  and  Tucker decompositions}
	If tensor $\mathcal{X} = [\mathcal{A}_1\mathcal{A}_2...\mathcal{A}_N]$ with $\mathcal{A}_1 = \mathcal{F}_1 \times_1 \textbf{U}_1$, $\mathcal{F}_1 \in \Re^{R_1 \times r_2\times... \times r_N}$, $\textbf{U}_1 \in \Re^{I_1 \times R_1}$, $\mathcal{A}_2 = \mathcal{F}_2 \times_2 \textbf{U}_2$, $\mathcal{F}_2 \in \Re^{r_1 \times R_2\times... \times r_N}$, $\textbf{U}_2 \in \Re^{I_2 \times R_2}$, ..., and $\mathcal{A}_N = \mathcal{F}_N \times_N \textbf{U}_N$, $\mathcal{F}_N \in \Re^{r_1 \times r_2\times... \times R_N}$,  $\textbf{U}_N \in \Re^{I_N \times R_N}$, then we have
	\begin{equation}\label{S10}
		\begin{aligned}
			\mathcal{X}_{i_1i_2...i_N} &= \sum_{p_1=1}^{r_1} \sum_{p_2=1}^{r_2}... \sum_{p_N=1}^{r_N} (\mathcal{A}_1)_{i_1p_2...p_N} (\mathcal{A}_2)_{p_1i_2...p_N}...(\mathcal{A}_N) _{p_1p_2...p_{N-1}i_N} \\&= \sum_{j_1=1}^{R_1} \sum_{j_2=1}^{R_2}... \sum_{j_N=1}^{R_N} (\textbf{U}_1)_{i_1j_1} (\textbf{U}_2)_{i_2j_2}... (\textbf{U}_N)_{i_Nj_N} \\&~ \cdot\underbrace{\sum_{p_1=1}^{r_1} \sum_{p_2=1}^{r_2}... \sum_{p_N=1}^{r_N} (\mathcal{F}_1)_{j_1p_2...p_N} (\mathcal{F}_2)_{p_1j_2...p_N}... (\mathcal{F}_N)_{p_1p_2...p_{N-1}j_N}}_{ ([\mathcal{F}_1\mathcal{F}_2...\mathcal{F}_N])_{j_1j_2...j_N}},
		\end{aligned}
	\end{equation}
	which corresponds to the formulation in (\ref{B4}). This implies that
	\begin{equation}\label{C36}
		[( \mathcal{F}_1 \times_1 \textbf{U}_1 )( \mathcal{F}_2 \times_2 \textbf{U}_2 )...(\mathcal{F}_N \times_N \textbf{U}_N )]=[ \mathcal{F}_1 \mathcal{F}_2... \mathcal{F}_N ] \times_1 \textbf{U}_1 \times_2 \textbf{U}_2... \times_n \textbf{U}_N.
	\end{equation}
	Based on (\ref{C36}), we can derive the following conclusions.
	\begin{theorem}\label{T55}
		Suppose  $\mathcal{X}\in \Re^{I_1\times I_2\times...\times I_N}$ 
		with ${\rm TuckRank(\mathcal{X})}=(t_1,t_2,...,t_N)$. Let
		$
		\mathcal{X} = \mathcal{D} \times_1 \textbf{U}_1 \times_2 \textbf{U}_2 \times... \times_N \textbf{U}_N
		$
		be a Tucker decomposition of $\mathcal{X}$ with $\mathcal{D} \in \Re^{t_1 \times t_2 \times...\times t_N}$ and factor matrices $\textbf{U}_1 \in \Re^{I_1 \times t_1}$, $\textbf{U}_2 \in \Re^{I_2 \times t_2}$,..., $\textbf{U}_N \in \Re^{I_N \times t_N}$. If $ \textbf{U}_i^\top \textbf{U}_i $, $ i = 1,2,...,N $ are invertible, then one has
		\begin{eqnarray}
			\mathrm{MtpRank} (\mathcal{X}) = \mathrm{MtpRank}(\mathcal{D}) \leq \mathrm{submax}\{ t_1, t_2,..., t_N \}.
		\end{eqnarray}
	\end{theorem}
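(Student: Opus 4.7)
The plan is to reduce $\mathrm{MtpRank}(\mathcal{X})$ to $\mathrm{MtpRank}(\mathcal{D})$ by exploiting the distributive identity~(\ref{C36}) together with the invertibility assumption, and then to bound the latter via the existence theorem stated just after Definition~\ref{Def1}.

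First I would set up a bijection between admissible Multiple-decomposition rank vectors for $\mathcal{X}$ and for $\mathcal{D}$. In one direction, any Multiple decomposition $\mathcal{D}=[\mathcal{F}_1\cdots\mathcal{F}_N]$ with rank vector $(r_1,\ldots,r_N)$ is mapped to a decomposition of $\mathcal{X}$ by setting $\mathcal{A}_n=\mathcal{F}_n\times_n\textbf{U}_n$; identity~(\ref{C36}) then yields $\mathcal{X}=[\mathcal{A}_1\cdots\mathcal{A}_N]$, and a quick dimension check confirms that each $\mathcal{A}_n$ has the shape demanded by the definition of Multiple decomposition. In the reverse direction, the hypothesis that each $\textbf{U}_i^\top\textbf{U}_i$ is invertible lets me introduce the left inverses $\textbf{V}_i=(\textbf{U}_i^\top\textbf{U}_i)^{-1}\textbf{U}_i^\top$, which satisfy $\textbf{V}_i\textbf{U}_i=\textbf{I}$. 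Using the commutation rules for mode products in distinct modes and $\mathcal{A}\times_n\textbf{U}\times_n\textbf{V}=\mathcal{A}\times_n(\textbf{V}\textbf{U})$, a short calculation gives
\begin{equation*}
\mathcal{D}=\mathcal{X}\times_1\textbf{V}_1\times_2\cdots\times_N\textbf{V}_N,
\end{equation*}
so any $\mathcal{X}=[\mathcal{A}_1\cdots\mathcal{A}_N]$ is transformed back into $\mathcal{D}=[(\mathcal{A}_1\times_1\textbf{V}_1)\cdots(\mathcal{A}_N\times_N\textbf{V}_N)]$ via~(\ref{C36}), again preserving the rank vector.

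Because this correspondence preserves rank vectors componentwise, restricting it to vectors with $r_1=\cdots=r_N$ shows $\mathrm{GTriRank}(\mathcal{X})=\mathrm{GTriRank}(\mathcal{D})$; hence condition~(1) of the Multiple rank definition imposes the same upper bound on admissible rank vectors for both tensors. Consequently the admissible sets coincide, the $\ell_1$-norm objective and lexicographic tiebreaker select the same minimizer, and we conclude $\mathrm{MtpRank}(\mathcal{X})=\mathrm{MtpRank}(\mathcal{D})$. Finally, since $\mathcal{D}\in\Re^{t_1\times\cdots\times t_N}$, the existence theorem after Definition~\ref{Def1} yields $\mathrm{GTriRank}(\mathcal{D})\le\mathrm{submax}\{t_1,\ldots,t_N\}$, and condition~(1) of the Multiple rank definition forces every component of $\mathrm{MtpRank}(\mathcal{D})$ to be bounded by $\mathrm{submax}\{t_1,\ldots,t_N\}$, completing the inequality.

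The main obstacle I anticipate is not the algebra itself, which reduces to careful bookkeeping with mode products and two applications of~(\ref{C36}), but rather the delicate interplay between the bijection of rank vectors and the self-referential admissibility constraint in the $\mathrm{MtpRank}$ definition (which is phrased in terms of $\mathrm{GTriRank}$ of the very tensor being decomposed). Verifying that this constraint is genuinely preserved under the bijection is the crux of the equality assertion; once established, the submax bound follows directly from the previously proved existence theorem.
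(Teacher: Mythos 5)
Your proposal is correct and follows essentially the same route as the paper's proof: both directions of the correspondence are obtained from identity~(\ref{C36}), using $(\textbf{U}_i^\top\textbf{U}_i)^{-1}\textbf{U}_i^\top$ as a left inverse to recover $\mathcal{D}$ from $\mathcal{X}$, so that factor tensors of the two decompositions transform into each other with the same rank vector. If anything, you are more careful than the paper on two points it leaves implicit, namely that the admissibility constraint $r_i\le\mathrm{GTriRank}(\cdot)$ is preserved under the correspondence and that the final $\mathrm{submax}$ bound follows from applying the existence theorem to $\mathcal{D}\in\Re^{t_1\times\cdots\times t_N}$.
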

	\begin{proof}  Assume 
		$
		\mathcal{D} =  [\widetilde{\mathcal{A}}_1 \widetilde{\mathcal{A}}_2... \widetilde{\mathcal{A}}_N ]
		$
		with $\widetilde{\mathcal{A}}_1 \in \Re^{t_1 \times r_2\times... \times r_N}$,
		$\widetilde{\mathcal{A}}_2 \in \Re^{r_1 \times t_2\times... \times r_N}$, ..., and
		$\widetilde{\mathcal{A}}_N \in \Re^{r_1 \times r_2 \times...\times t_N}$. Then from (\ref{C36}), one has
		\begin{equation}\label{C2}
			\begin{aligned}
				\hspace{-0cm}\mathcal{X} &= [ \widetilde{\mathcal{A}}_1 \widetilde{\mathcal{A}}_2... \widetilde{\mathcal{A}}_N ] \times_1 \textbf{U}_1 \times_2 \textbf{U}_2... \times_n \textbf{U}_N\\ &
				= [( \widetilde{\mathcal{A}}_1 \times_1 \textbf{U}_1 )( \widetilde{\mathcal{A}}_2 \times_2 \textbf{U}_2 )...( \widetilde{\mathcal{A}}_N \times_N \textbf{U}_N )].
			\end{aligned}
		\end{equation}
		Clearly, $\widetilde{\mathcal{A}}_1 \times_1 \textbf{U}_1 \in \Re^{I_1 \times r_2\times... \times r_N}$,
		$\widetilde{\mathcal{A}}_2 \times_2 \textbf{U}_2 \in \Re^{r_1 \times I_2\times... \times r_N}$,..., and
		$\widetilde{\mathcal{A}}_N \times_N \textbf{U}_N \in \Re^{r_1 \times r_2 \times...\times I_N}$. Since  $ \textbf{U}_i^\top \textbf{U}_i $, $ i = 1,2,...,N $ are invertible, one has
		\begin{equation*}
			\begin{aligned}
				&\quad~ \mathcal{X} \times_1  ( \textbf{U}_1^\top \textbf{U}_1  )^{-1} \textbf{U}_1^\top \times_2  ( \textbf{U}_2^\top \textbf{U}_2  )^{-1} \textbf{U}_2^\top\times... \times_N  ( \textbf{U}_N^\top \textbf{U}_N  )^{-1} \textbf{U}_N^\top \\
				& = \mathcal{D} \times_1  ( \textbf{U}_1^\top \textbf{U}_1  )^{-1} (\textbf{U}_1^\top \textbf{U}_1) \times_2  ( \textbf{U}_2^\top \textbf{U}_2  )^{-1} (\textbf{U}_2^\top \textbf{U}_2) \times...\times_N  ( \textbf{U}_N^\top \textbf{U}_N  )^{-1} (\textbf{U}_N^\top \textbf{U}_N) \\
				&=  \mathcal{D} \times_1 \textbf{I}_{r_1} \times_2 \textbf{I}_{r_2}\times... \times_N \textbf{I}_{r_N} =  \mathcal{D}.
			\end{aligned}
		\end{equation*}
		Assume that ${\rm MtpRank(\mathcal{X})}=(R_1, R_2, ..., R_N)$, then there  exists $\mathcal{A}_1\in \Re^{I_1\times R_2\times...\times R_N}$, $\mathcal{A}_2\in \Re^{R_1\times I_2\times...\times R_N}$, ...,  and $\mathcal{A}_N\in \Re^{R_1\times R_2\times...\times R_{N-1}\times I_N}$, s.t., $\mathcal{X}=[\mathcal{A}_1\mathcal{A}_2...\mathcal{A}_N]$.
		Hence, it holds that
		\begin{equation*}\label{C22}
			\begin{aligned}
				\mathcal{D} &=   [\mathcal{A}_1 \mathcal{A}_2... \mathcal{A}_N ]\times_1  ( \textbf{U}_1^\top \textbf{U}_1  )^{-1} \textbf{U}_1^\top \times_2  ( \textbf{U}_2^\top \textbf{U}_2  )^{-1} \textbf{U}_2^\top \times...\times_N  ( \textbf{U}_N^\top \textbf{U}_N  )^{-1} \textbf{U}_N^\top \nonumber \\
				&= [ (  \mathcal{A}_1 \times_1  ( \textbf{U}_1^\top \textbf{U}_1  )^{-1} \textbf{U}_1^\top  )  (  \mathcal{A}_2 \times_2  ( \textbf{U}_2^\top \textbf{U}_2  )^{-1} \textbf{U}_2^\top)... (  \mathcal{A}_N \times_N  ( \textbf{U}_N^\top \textbf{U}_N  )^{-1} \textbf{U}_N^\top  )].
			\end{aligned}
		\end{equation*}
		We can find that
		$
		\mathcal{A}_1 \times_1  ( \textbf{U}_1^\top \textbf{U}_1  )^{-1} \textbf{U}_1^\top  \in \Re^{t_1 \times R_2\times... \times R_N}, ~
		\mathcal{A}_2 \times_2  ( \textbf{U}_2^\top \textbf{U}_2  )^{-1} \textbf{U}_2^\top \in \Re^{R_1 \times t_2\times... \times R_N},
		$
		and
		$
		\mathcal{A}_N \times_N  ( \textbf{U}_N^\top \textbf{U}_N  )^{-1} \textbf{U}_N^\top \in \Re^{R_1 \times R_2 \times...\times t_N}
		$.
		Combined with (\ref{C2}), it implies that the factor tensors of $\mathcal{D}$ and those of $\mathcal{X}$ can be transformed into each other. Therefore,  we have ${\rm MtpRank(\mathcal{X})} = {\rm MtpRank(\mathcal{D})} $. 
	\end{proof}
	
	In fact, there is also a close connection between the tensor ranks of $\mathcal{X}$ and those of its factor tensors in generalized triple  decomposition.
	
	\begin{theorem}\label{T66}
		Let $\mathcal{X} = [\mathcal{A}_1\mathcal{A}_2...\mathcal{A}_N]$ be the generalized triple  decomposition; then for $n = 1, 2, ...,N$, one has
		\begin{equation*}
			\mathrm{TuckRank}(\mathcal{X})_n \leq \mathrm{TuckRank}(\mathcal{A}_n)_n \leq \mathrm{GTriRank}(\mathcal{A}_n)^{N-1} \leq \mathrm{GTriRank}(\mathcal{X})^{N-1}. 
		\end{equation*}
	\end{theorem}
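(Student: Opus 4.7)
My plan is to establish the three inequalities left-to-right, using Property~3 of Theorem~\ref{T22} twice and the upper bound on the generalized triple rank once.

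\textbf{Step 1 (leftmost inequality).} Starting from $\mathcal{X}=[\mathcal{A}_1\mathcal{A}_2\cdots\mathcal{A}_N]$, I would invoke Property~3 of Theorem~\ref{T22} to write $\mathcal{X}_{(n)}=(\mathcal{A}_n)_{(n)}\,\mathcal{E}_{n[N-1]}^{\top}$ for each $n$. Since the rank of a matrix product is bounded by the rank of either factor, this gives
$\mathrm{TuckRank}(\mathcal{X})_n=\mathrm{rank}(\mathcal{X}_{(n)})\le\mathrm{rank}((\mathcal{A}_n)_{(n)})=\mathrm{TuckRank}(\mathcal{A}_n)_n$, which is the desired inequality.

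\textbf{Step 2 (middle inequality).} Let $s_n:=\mathrm{GTriRank}(\mathcal{A}_n)$. Viewing $\mathcal{A}_n$ itself as an $N$th-order tensor, I would apply its own generalized triple decomposition $\mathcal{A}_n=[\mathcal{B}_1\mathcal{B}_2\cdots\mathcal{B}_N]$ with common auxiliary dimension $s_n$, and again use Property~3 of Theorem~\ref{T22} to obtain $(\mathcal{A}_n)_{(n)}=(\mathcal{B}_n)_{(n)}\widetilde{\mathcal{E}}_{n[N-1]}^{\top}$. The dominant dimension of $\mathcal{B}_n$ sits in position $n$ (which matches the $I_n$ slot of $\mathcal{A}_n$), so $(\mathcal{B}_n)_{(n)}$ has size $I_n\times s_n^{N-1}$. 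Consequently $\mathrm{TuckRank}(\mathcal{A}_n)_n\le \mathrm{rank}((\mathcal{B}_n)_{(n)})\le s_n^{N-1}=\mathrm{GTriRank}(\mathcal{A}_n)^{N-1}$.

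\textbf{Step 3 (rightmost inequality).} Let $r:=\mathrm{GTriRank}(\mathcal{X})$. By the shape prescribed in Definition~\ref{Def1}, the tensor $\mathcal{A}_n$ lives in $\Re^{r\times\cdots\times r\times I_n\times r\times\cdots\times r}$, so among its $N\ge 3$ dimensions, at least $N-1\ge 2$ of them equal $r$. Hence $\mathrm{submax}\{r,\ldots,r,I_n,r,\ldots,r\}=r$ regardless of whether $I_n\le r$ or $I_n>r$. Invoking the well-definedness theorem (the upper bound $\mathrm{GTriRank}(\cdot)\le\mathrm{submax}$ of its dimensions) applied to $\mathcal{A}_n$, I obtain $\mathrm{GTriRank}(\mathcal{A}_n)\le r=\mathrm{GTriRank}(\mathcal{X})$. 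Raising both sides to the $(N-1)$st power and chaining with Steps~1--2 completes the proof.

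The argument is mostly bookkeeping; the main thing to get right is the dimensional accounting in Step~2, namely that the generalized triple decomposition of $\mathcal{A}_n$ places its $n$th factor $\mathcal{B}_n$ with dominant axis aligned to the $I_n$ mode of $\mathcal{A}_n$, so that the same unfolding identity from Property~3 applies and yields the $I_n\times s_n^{N-1}$ size. No new ideas beyond Theorem~\ref{T22} and the GTriRank upper bound are needed.
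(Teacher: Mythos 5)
Your proposal is correct, and its overall skeleton (chain the three inequalities; get the middle one by decomposing $\mathcal{A}_n$ itself and bounding the size of the resulting unfolding; get the last one from the submax upper bound on the generalized triple rank) matches the paper's. Where you diverge is in the mechanism for the first two inequalities: you invoke Property~3 of Theorem~\ref{T22}, writing $\mathcal{X}_{(n)}=(\mathcal{A}_n)_{(n)}\mathcal{E}_{n[N-1]}^{\top}$ and using $\mathrm{rank}(AB)\le\min\{\mathrm{rank}(A),\mathrm{rank}(B)\}$ together with the definition $\mathrm{TuckRank}(\mathcal{A}_n)_n=\mathrm{rank}((\mathcal{A}_n)_{(n)})$, and you repeat the same device for $(\mathcal{A}_n)_{(n)}=(\mathcal{B}_n)_{(n)}\widetilde{\mathcal{E}}_{n[N-1]}^{\top}$ with $(\mathcal{B}_n)_{(n)}$ of size $I_n\times s_n^{N-1}$. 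The paper instead takes a Tucker decomposition of each $\mathcal{A}_n$, absorbs all but the mode-$n$ factor matrix into a reduced core $\bar{\mathcal{A}}_n$, and uses the exchange identity (\ref{C36}) to exhibit $\mathcal{X}=[\bar{\mathcal{A}}_1\cdots\bar{\mathcal{A}}_N]\times_1\mathbf{U}_1\cdots\times_N\mathbf{U}_N$ with $\mathbf{U}_n\in\Re^{I_n\times r_n}$, from which the rank bound on $\mathcal{X}_{(n)}$ follows; it then reuses that first inequality (applied to $\mathcal{A}_1$ and $\hat{\mathcal{B}}_1$) for the middle bound. Both arguments are valid; yours is more elementary and avoids the detour through (\ref{C36}), while the paper's version ties the result to the hybrid Tucker--Multiple structure it develops elsewhere. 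Your Step~3 is the same submax argument as the paper's, and you correctly spell out the point the paper leaves implicit, namely that $\mathrm{submax}\{r,\ldots,r,I_n,r,\ldots,r\}=r$ whether or not $I_n>r$ because $N\ge 3$ guarantees at least two axes of length $r$.
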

	\begin{proof}
	Suppose  $\mathrm{GTriRank}(\mathcal{X}) = r$,  $\mathrm{TuckRank}(\mathcal{A}_1)_1 = r_1$, $\mathrm{TuckRank}(\mathcal{A}_2)_2 = r_2$,..., and $\mathrm{TuckRank}(\mathcal{A}_N)_N = r_N$. Let $\mathcal{A}_1 = \mathcal{F} \times_1 \textbf{U}_1 \times_2 \textbf{V}_2 \times...\times_N \textbf{V}_n$ be a Tucker decomposition of $\mathcal{A}_1$ with core tensor $\mathcal{F} \in \Re^{r_1 \times s_2 \times...\times s_N}$ and factor matrices $\textbf{U}_1 \in \Re^{I_1 \times r_1}$, $\textbf{V}_2 \in \Re^{r \times s_2}$,..., $\textbf{V}_N \in \Re^{r \times s_N}$. Denote $\bar{\mathcal{A}}_1 = \mathcal{F} \times_2 \textbf{V}_2\times... \times_N \textbf{V}_N \in \Re^{r_1 \times r \times... \times r}$. Then $\mathcal{A}_1 = (\mathcal{F} \times_2 \textbf{V}_2\times... \times_N \textbf{V}_N) \times_1 \textbf{U}_1 = \bar{\mathcal{A}}_1 \times_1 \textbf{U}_1$.
	Similarly, there exist $\bar{\mathcal{A}}_2 \in \Re^{r \times r_2 \times...\times r}$,..., $\bar{\mathcal{A}}_N \in \Re^{r \times r \times...\times r_N}$, and $\textbf{U}_2 \in \Re^{I_2 \times r_2}$,..., $\textbf{U}_N \in \Re^{I_N \times r_N}$ such that
	$
	\mathcal{A}_1 = \bar{\mathcal{A}}_1 \times_1 \textbf{U}_1, \quad \mathcal{A}_2 = \bar{\mathcal{A}}_2 \times_2 \textbf{U}_2, ~...,~ \mathcal{A}_N = \bar{\mathcal{A}}_N \times_N \textbf{U}_N.
	$
	Hence, $\mathcal{X} = [\mathcal{A}_1\mathcal{A}_2...\mathcal{A}_N] = ([\bar{\mathcal{A}_1} \bar{\mathcal{A}_2} ...\bar{\mathcal{A}}_N]) \times_1 \textbf{U}_1 \times_2 \textbf{U}_2\times... \times_N \textbf{U}_N$ according to (3.5). From the definition of Tucker rank, for $n=1, 2, ..., N$, we have
	\begin{equation}\label{E10}
		\mathrm{TuckRank}(\mathcal{X})_n \leq \mathrm{TuckRank}(\mathcal{A}_n)_n.
	\end{equation}
	
	Assume that $\mathrm{GTriRank}(\mathcal{A}_1) = \bar{r}$. Then there are  $\hat{\mathcal{B}}_1 \in \Re^{I_1 \times \bar{r} \times...\times \bar{r}}$, $\hat{\mathcal{B}}_2 \in \Re^{\bar{r} \times r \times...\times \bar{r}}$,..., and $\hat{\mathcal{B}}_N \in \Re^{\bar{r} \times \bar{r}\times... \times r}$ such that $\mathcal{A}_1 = [\hat{\mathcal{B}}_1 \hat{\mathcal{B}}_2... \hat{\mathcal{B}}_N]$. Replacing $\mathcal{X}$ and $\mathcal{A}_1$ in the first inequality of (\ref{E10}) by $\mathcal{A}_1$ and $\hat{\mathcal{B}}_1$, we have $\mathrm{TuckRank}(\mathcal{A}_1)_1 \leq \mathrm{TuckRank}(\hat{\mathcal{B}}_1)_1$. Note that $\hat{\mathcal{B}}_1 \in \Re^{n_1 \times \bar{r} \times...\times \bar{r}}$. By the definition of the Tucker rank, $\mathrm{TuckRank}(\hat{\mathcal{B}}_1)_1$ is the rank of an $n_1 \times \bar{r}^{N-1}$ matrix. Hence, $\mathrm{TuckRank}(\hat{\mathcal{B}}_1)_1 \leq \bar{r}^{N-1}$. In a similar way, we can prove 
	\begin{equation}
		\mathrm{TuckRank}(\mathcal{A}_n)_n \leq \mathrm{GTriRank}(\mathcal{A}_n)^{N-1}.
	\end{equation}
	
	Since $\mathcal{A}_1 = [\hat{\mathcal{B}}_1 \hat{\mathcal{B}}_2... \hat{\mathcal{B}}_N]$ and $\mathcal{A}_1 \in \Re^{n_1 \times r \times...\times r}$, it follows from  Theorem 3.1 that $\mathrm{GTriRank}(\mathcal{A}_1) \leq r = \mathrm{GTriRank}(\mathcal{X})$. Then for $n=2,3,...,N$, we can prove in a similar way. Therefore, the third inequality of Theorem 3.6 holds.
\end{proof}

	Next, we introduce the concept of hybrid multiple decomposition.
	\begin{defn}\textbf{(Hybrid multiple decomposition)}
		Suppose  the tensor $\mathcal{X}\in \Re^{I_1\times I_2\times...\times I_N}$ has Multiple rank
		 $\mathrm{MtpRank}(\mathcal{X}) = (R_1,R_2,...,R_N)$ and Tucker rank $\mathrm{TuckRank}(\mathcal{X})=(r_1,r_2,...,r_N)$. Consider a Tucker decomposition of $\mathcal{X}$:
		$
			\mathcal{X} = \mathcal{D} \times_1 \textbf{U}_1 \times_2 \textbf{U}_2 \times... \times_N \textbf{U}_N,
		$
		where $\mathcal{D} \in \Re^{r_1 \times r_2 \times...\times r_N}$ and  $\textbf{U}_1 \in \Re^{I_1 \times r_1}$, $\textbf{U}_2 \in \Re^{I_2 \times r_2}$,..., $\textbf{U}_N \in \Re^{I_N \times r_N}$. Furthermore, if $ \textbf{U}_i^\top \textbf{U}_i $, $ i = 1,2,...,N $ are invertible and the core tensor $\mathcal{D}$ has Multiple decomposition $\mathcal{D}=[\mathcal{A}_1\mathcal{A}_2...\mathcal{A}_N]$ with $\mathcal{A}_1 \in \Re^{r_1 \times R_2\times ... \times R_N}$,
		$\mathcal{A}_2 \in \Re^{R_1 \times r_2\times... \times R_N}$,..., and $\mathcal{A}_N \in \Re^{R_1 \times R_2 \times...\times r_N}$, then we call
		$$
		\mathcal{X} = [\mathcal{A}_1\mathcal{A}_2...\mathcal{A}_N] \times_1 \textbf{U}_1 \times_2 \textbf{U}_2 \times... \times_N \textbf{U}_N  
		= [(\mathcal{A}_1\times_1 \textbf{U}_1)(\mathcal{A}_2\times_2 \textbf{U}_2)...(\mathcal{A}_N\times_N \textbf{U}_N)]
		$$
		a \textit{hybrid multiple decomposition} of $\mathcal{X}$. 
	\end{defn}
	
	The hybrid multiple decomposition can further exploit the latent correlations along the long dimensions of the factor tensors in Multiple decomposition. This  is especially effective when strong correlations exist within individual modes.
	
	\subsubsection{The relation between Multiple  decomposition and  tensor network decompositions}
	Next, we compare Multiple decomposition with several classical tensor network decompositions, including Tensor Train (TT) decomposition \cite{TT}, Tensor Ring (TR) decomposition \cite{TR} and Fully Connected Tensor Network (FCTN) decomposition \cite{FCTN}. Figure \ref{CF2} illustrates the schematic diagrams of these decompositions.
	From the figure, we can find that
	\begin{itemize}
		\item TT and TR decompositions rely on sequential, nearest-neighbor interactions, which may hinder direct modeling of long-range or cross-mode correlations. Although Multiple decomposition does not establish explicit connections between every pair of factor tensors, it enables global interactions through shared virtual indices in the summation process.
		
		\item The FCTN decomposition establishes interactions between every pair of factor tensors, enabling it to fully capture the correlations among all modes. However, when decomposing an $ N $-th order tensor, the FCTN rank involves a vector with $ N(N-1)/2 $ elements, leading to significantly increased computational complexity.
		In contrast,  Multiple decomposition captures global correlations  via implicit coupling through shared latent dimensions, while the Multiple rank takes the form of an $N$-dimensional vector, resulting in lower storage and computational costs for high-order tensors.
	\end{itemize}
	\begin{figure}[t]
		\centering
		\includegraphics[width=6.3cm]{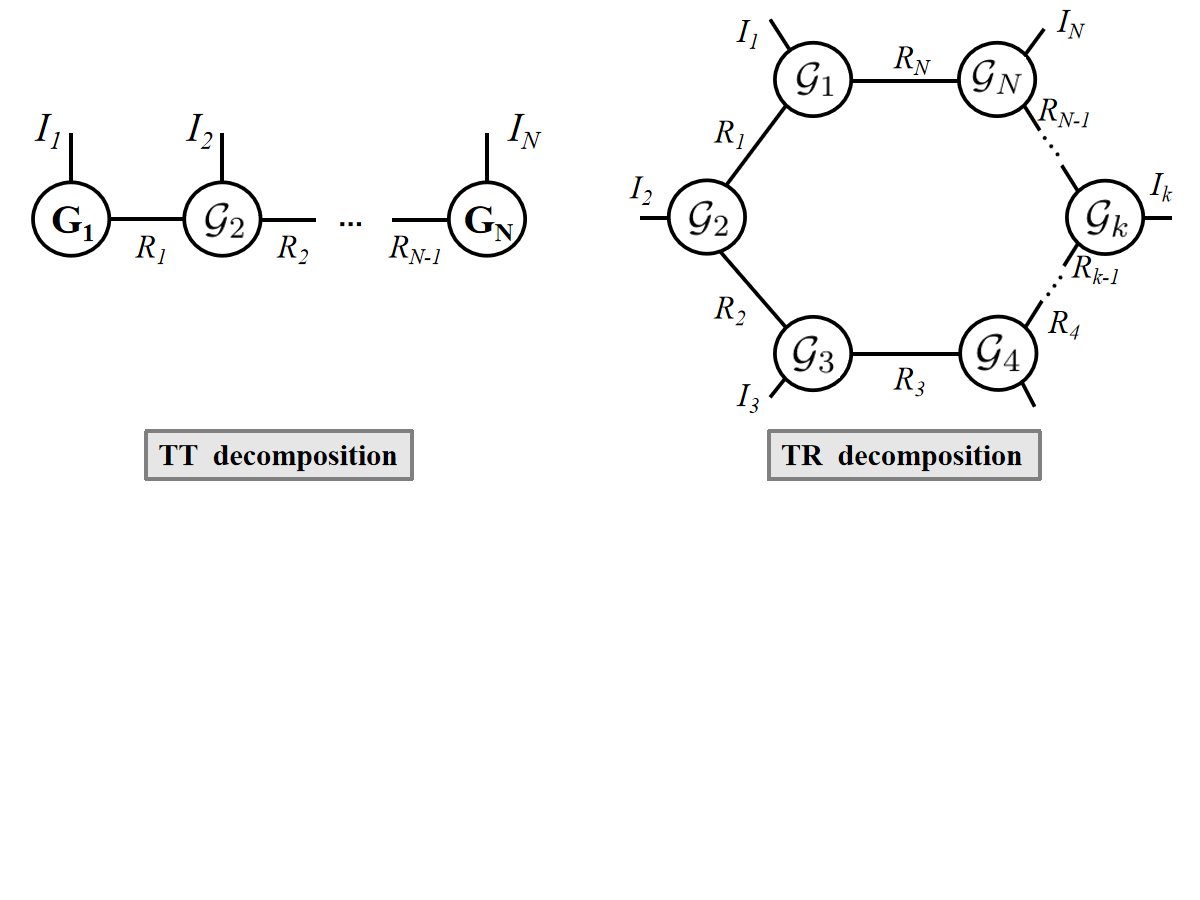}
		\includegraphics[width=6.3cm]{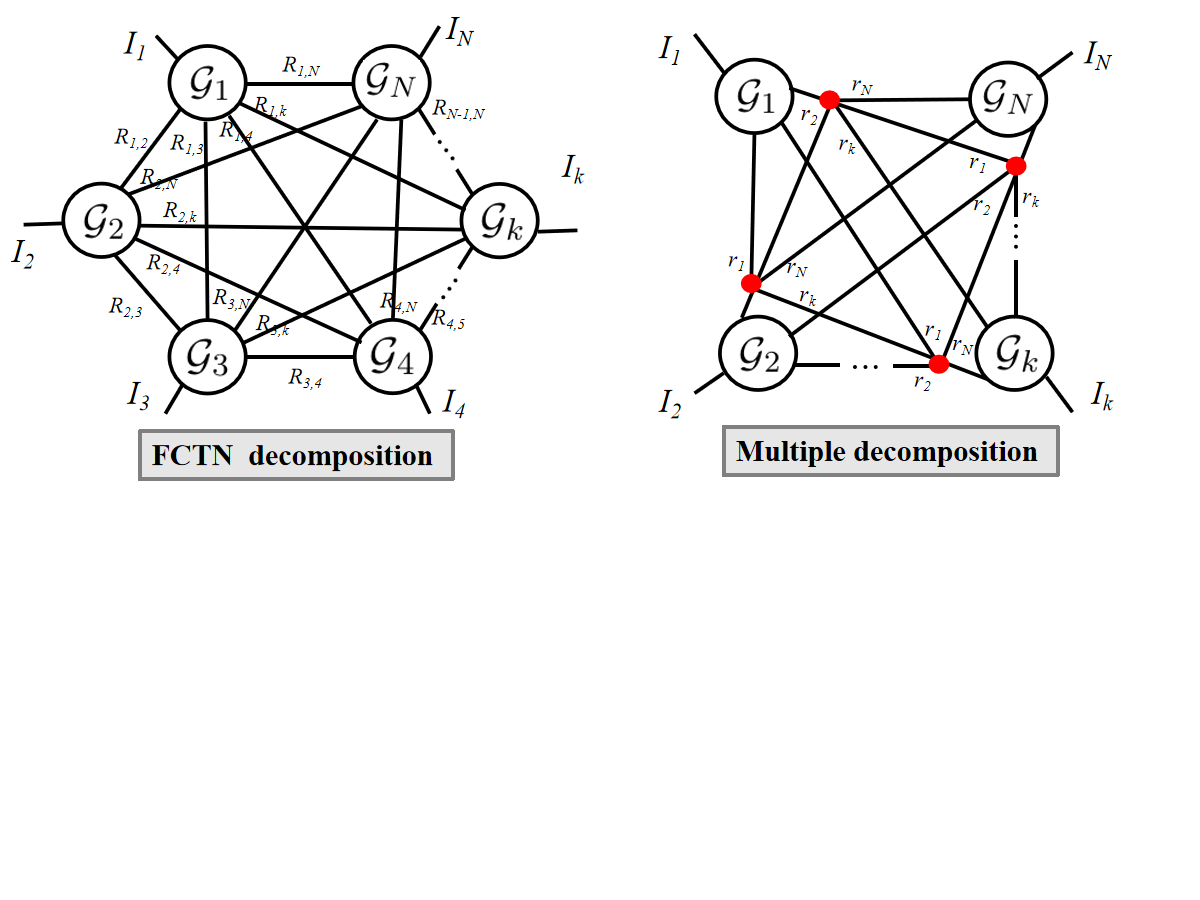}
		\vspace{-60pt}
		\caption{ \small Illustration of the TT, TR, FCTN and the proposed Multiple decomposition.   }
		\label{CF2}
		\vspace{-15pt}
	\end{figure}
	For third-order tensors, the TR, FCTN, and  Multiple decompositions  share  structural similarities. Triple decomposition imposes the constraint that the short-side dimensions of the factor tensors must be identical, which can be viewed as a constrained instance. In the case of tensors with order four or above, the TR, FCTN, and Multiple decompositions exhibit distinct structural and representational characteristics. Due to the distinct modeling mechanism, Multiple decomposition is not a special case of TR or FCTN, nor is it fully encompassed by them in higher-order settings.
	
	\subsection{Implicit Multiple  Tensor Decomposition}
	Non-grid data, such as point clouds, may also possess intrinsic low-rank structures \cite{Zhu2022}. Current tensor decompositions are primarily designed for regular grid-structured data and cannot be directly applied to non-grid data.   To address this issue, we integrate Multiple decomposition with implicit neural representation (INR), which  models data as a continuous function of coordinates. This leads to the proposed Implicit Multiple Tensor Decomposition (IMTD). Specifically, we regard the data $\mathcal{X}$ as a bounded tensor function $$f_{\mathcal{X}}(\cdot):D_\mathcal{X}:=\Omega_1\times \Omega_2\times...\times \Omega_N\to \Re.$$ 
	When $\mathcal{X} \in \Re^{I_1 \times I_2 \times \cdots \times I_N}$ is a regular grid tensor, each index set $\Omega_n$ can be defined as $\Omega_n := \{1, 2, \ldots, I_n\}$ for $n = 1, 2, \ldots, N$. In the case of non-grid data,  $\Omega_n$ are continuous subsets of $\Re$, and observations are given at irregularly sampled coordinates within $D_\mathcal{X}$. Then, we represent each factor tensor $ \mathcal{A}_n (n = 1,2,...,N) $ as a continuous function $\mathcal{A}_{\Theta_n}(\cdot)$ via a neural network parameterized by $\Theta_n$.  Once the parameters $\Theta_1, \Theta_2,...,\Theta_N$ are trained, we can obtain the factor tensor for the desired dimension by inputting the coordinates of that dimension. 
	\begin{figure}[t]
		\vspace{-5pt}
		\centering
		\!\!\!\!\!\!\!\includegraphics[width=8.3cm]{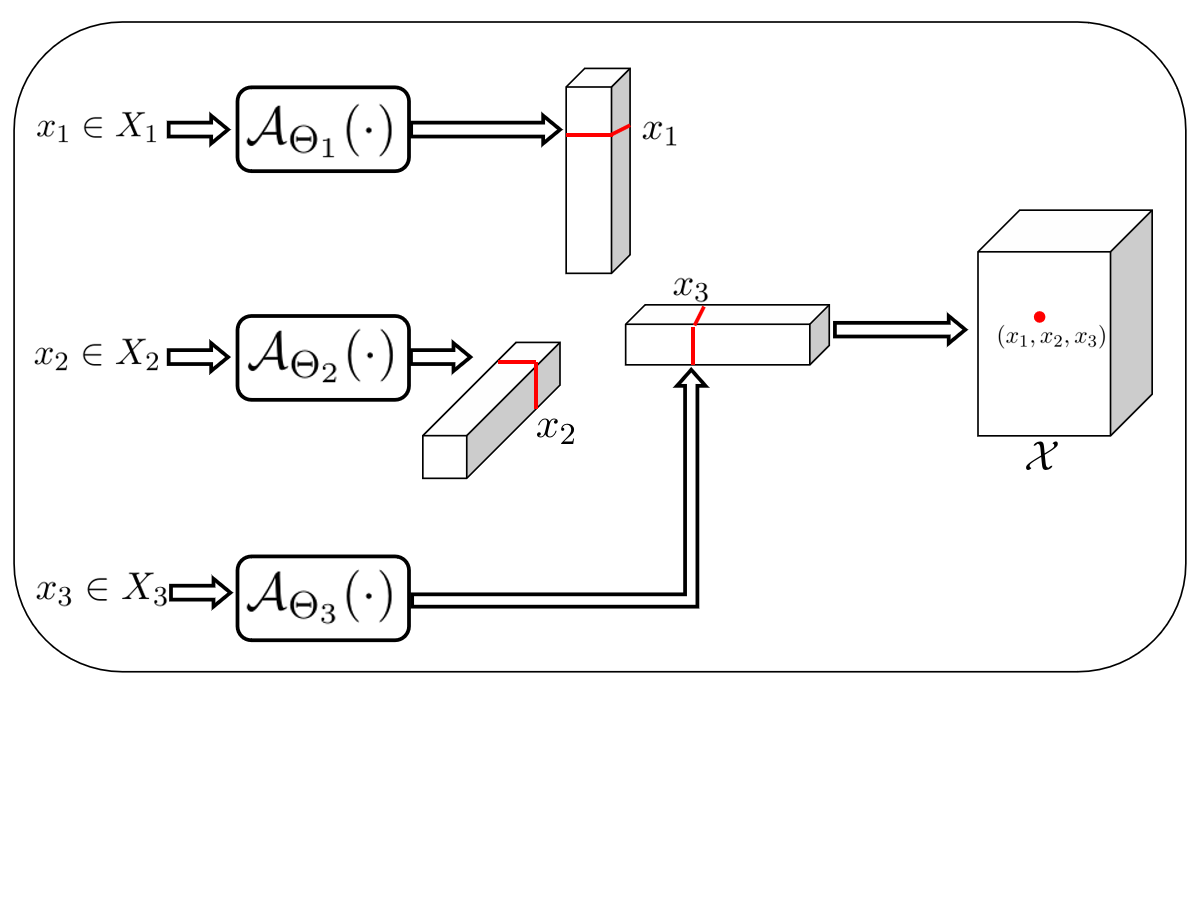}
		
		\vspace{-45pt}
		\caption{ \small The flowchart of the IMTD (taking a third-order tensor as an example).   }
		\label{CF3}
		\vspace{-20pt}
	\end{figure}
	Taking $ \mathcal{A}_{\Theta_1}(\cdot) $ as an example, it can be formulated as
	\begin{equation}\label{D13}
		\mathcal{A}_{\Theta_1}(x_1) := \text{reshape}(\textbf{H}_d (\sigma(\textbf{H}_{d-1} \cdots \sigma(\textbf{H}_1 x_1)) )) : \Omega_1 \to \Re^{1\times r_2\times r_3\times...\times r_N},
	\end{equation}
	where $ \Omega_1 \subset \Re $ is the definition domain in the first dimension,  $ \sigma(\cdot) $ is the nonlinear activation function, the `reshape' operation reorganizes the vector into a tensor of the specified size, and $ \Theta_1 := \{\textbf{H}_i\}_{i=1}^{d} $ are learnable weight matrices of the Multilayer Perceptron (MLP).  The MLP-parameterized IMTD is formulated as
	\begin{equation}\label{D14}
		f_{\mathcal{X}}(\textbf{x})=[\mathcal{A}_{\Theta_1}(x_1)\mathcal{A}_{\Theta_2}(x_2)...\mathcal{A}_{\Theta_N}(x_N)]\in \Re,
	\end{equation}
	where $\textbf{x}=(x_1,x_2,...,x_N)\in \Re^n$ is the coordinate of the observation point.  The flowchart of the IMTD for the third-order tensor case is shown in Figure \ref{CF3}. Through the above procedure, we can solve the IMTD by optimizing the network parameters.
	
	\begin{remark}
		By leveraging Property 3 of Theorem \ref{T22}, the factor tensors can be decoupled and  optimized within  Block Coordinate Descent (BCD) framework, with convergence guarantees established following the analysis in~\cite{TriB}. However, this approach relies on  grid-structured factor tensors. In contrast, IMTD represents the factor tensors as continuous implicit functions, enabling the model to generalize across arbitrary coordinates. Therefore, IMTD can also capture complex nonlinear patterns and uncover the underlying low-rank structure in non-grid data.
	\end{remark}

	\begin{theorem}\label{T6}
		Let the mappings $\mathcal{A}_{\Theta_1}(\cdot): \Omega_1 \rightarrow \mathbb{R}^{1\times r_2\times r_3\times...\times r_N}$, $\mathcal{A}_{\Theta_2}(\cdot): \Omega_2 \rightarrow \mathbb{R}^{r_1\times 1\times r_3\times...\times r_N}$,..., and $\mathcal{A}_{\Theta_N}(\cdot): \Omega_N \rightarrow \mathbb{R}^{r_1\times r_2\times...\times r_{N-1}\times 1}$ be $N$ MLPs structured as in (\ref{D13}) with parameters $\Theta_1$, $\Theta_2$,..., $\Theta_N$, where $\Omega_1, \Omega_2,..., \Omega_N \subset \mathbb{R}$ are bounded. Let the MLPs share the same activation function $\sigma(\cdot)$ and depth $d$. We assume that
		\begin{itemize}
			\item $\sigma(\cdot)$ is Lipschitz continuous  with Lipschitz constant $\kappa$.
			\item The $\ell_1$ norm of each weight matrix $\mathbf{H}_i$ in the  MLPs is bounded by $\omega > 0$.
		\end{itemize}
		Define the function $f(\cdot): D_f = \Omega_1 \times \Omega_2 \times...\times \Omega_N \rightarrow \mathbb{R}$ as $f(\cdot) := [\mathcal{A}_{\Theta_1}\mathcal{A}_{\Theta_2}...\mathcal{A}_{\Theta_N}](\cdot)$. Then, 
		for any $\textbf{x}=(x_1, x_2,..., x_N), \textbf{y}=(y_1, y_2,..., y_N) \in D_f$, one has
		$$
		|f(\textbf{x}) - f(\textbf{y})|\leq \delta\Vert\textbf{x}-\textbf{y} \Vert,
		$$
		i.e., $f$ is Lipschitz continious in $D_f$, where $\delta = \sqrt{2}\omega^{Nd} \kappa^{Nd-N} \zeta^{N-1}$ and $\zeta = \Vert \textbf{x} \Vert_{\infty}$. 
	\end{theorem}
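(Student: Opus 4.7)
The plan is to reduce the Lipschitz estimate for $f$ to two ingredients that can be handled separately: (i) a bound on the $\ell_1$ norm of each factor‑tensor output $\mathcal{A}_{\Theta_n}(x_n)$, and (ii) a Lipschitz bound on each individual MLP $\mathcal{A}_{\Theta_n}$. Once these are in hand, I will splice them together using the two operational properties of the multiple product proved in Theorem~\ref{T22}: the distributive law (Property 1), which allows me to telescope $f(\textbf{x})-f(\textbf{y})$ slot by slot, and the multilinear inequality (Property 2), which controls each telescoped term by a product of $\ell_1$ norms of its slices.

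First I would show that an MLP of the form in \eqref{D13} is Lipschitz in the $\ell_1$ sense. Because each $\mathbf{H}_i$ has $\ell_1$ matrix norm at most $\omega$ and the activation $\sigma$ is coordinatewise Lipschitz with constant $\kappa$, a straightforward induction over the $d$ layers gives $\|\mathcal{A}_{\Theta_n}(x_n)-\mathcal{A}_{\Theta_n}(y_n)\|_{\ell_1}\le \omega^{d}\kappa^{d-1}\,|x_n-y_n|$ (the reshape operation is a mere reindexing, so it preserves the $\ell_1$ norm). The same induction applied to $\mathcal{A}_{\Theta_n}(x_n)$ itself, starting from $\mathbf{H}_1 x_n$, yields the magnitude bound $\|\mathcal{A}_{\Theta_n}(x_n)\|_{\ell_1}\le \omega^{d}\kappa^{d-1}\,|x_n|\le \omega^{d}\kappa^{d-1}\,\zeta$ for every $x_n$ in the bounded domain, where $\zeta=\|\textbf{x}\|_{\infty}$ provides the uniform upper bound.

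Next I telescope. By Property 1 of Theorem~\ref{T22},
\begin{equation*}
f(\textbf{x})-f(\textbf{y})=\sum_{n=1}^{N}\bigl[\mathcal{A}_{\Theta_1}(y_1)\cdots \mathcal{A}_{\Theta_{n-1}}(y_{n-1})\,\bigl(\mathcal{A}_{\Theta_n}(x_n)-\mathcal{A}_{\Theta_n}(y_n)\bigr)\,\mathcal{A}_{\Theta_{n+1}}(x_{n+1})\cdots \mathcal{A}_{\Theta_N}(x_N)\bigr].
\end{equation*}
Applying Property 2 of Theorem~\ref{T22} to each summand and substituting the two bounds from Step 1 gives a factor $(\omega^{d}\kappa^{d-1}\zeta)^{N-1}$ from the $N-1$ unchanged slices and a factor $\omega^{d}\kappa^{d-1}|x_n-y_n|$ from the changed slice. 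Summing yields $|f(\textbf{x})-f(\textbf{y})|\le \omega^{Nd}\kappa^{N(d-1)}\zeta^{N-1}\sum_{n=1}^{N}|x_n-y_n|$. Converting the $\ell_1$ sum to the Euclidean norm via Cauchy–Schwarz produces the constant $\delta$ of the stated form (up to the precise numerical prefactor, which is extracted from this last inequality).

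The step I expect to require most care is the bookkeeping in Step 1: matching the correct matrix norm (induced $\ell_1$) to the correct vector norm so that the tensorial $\ell_1$ used in Property 2 is exactly what the MLP layer bounds produce, while keeping the reshape operation norm‑preserving. A secondary subtlety is ensuring that the magnitude bound $\|\mathcal{A}_{\Theta_n}(x_n)\|_{\ell_1}\le \omega^{d}\kappa^{d-1}\zeta$ holds uniformly on the bounded domains $\Omega_n$; this will follow from the hypothesis that the $\Omega_n$ are bounded and the use of $\zeta=\|\textbf{x}\|_{\infty}$ as a uniform upper bound in each coordinate. Once these norm‑interface issues are carefully handled, the remainder of the proof is a clean telescoping argument driven entirely by Properties 1 and 2 of Theorem~\ref{T22}.
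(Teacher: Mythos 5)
Your proposal is correct and follows essentially the same route as the paper's proof: layer-by-layer $\ell_1$ bounds giving $\|\mathcal{A}_{\Theta_n}(x_n)\|_{\ell_1}\le\omega^d\kappa^{d-1}|x_n|$ and $\|\mathcal{A}_{\Theta_n}(x_n)-\mathcal{A}_{\Theta_n}(y_n)\|_{\ell_1}\le\omega^d\kappa^{d-1}|x_n-y_n|$, combined with Properties 1 and 2 of Theorem~\ref{T22} in a coordinate-by-coordinate (telescoping) argument. Your explicit Cauchy--Schwarz step at the end is actually more careful than the paper's final "we can deduce," and it correctly flags that the numerical prefactor comes from that conversion.
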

    \begin{proof}
	For any $(x_1, x_2,..., x_N), (y_1, y_2,..., y_N) \in D_f$, direct calculation yields
	\begin{equation*}
		\begin{aligned}
			&~\quad|f(x_1, x_2,..., x_N) - f(y_1, x_2,..., x_N)| 
			\\&= |[\mathcal{A}_{\Theta_1}(x_1)\mathcal{A}_{\Theta_2}(x_2)...\mathcal{A}_{\Theta_N}(x_N)]-[\mathcal{A}_{\Theta_1}(y_1)\mathcal{A}_{\Theta_2}(x_2)...\mathcal{A}_{\Theta_N}(x_N)]|\\
			&=| [(\mathcal{A}_{\Theta_1}(x_1)-\mathcal{A}_{\Theta_1}(y_1))\mathcal{A}_{\Theta_2}(x_2)...\mathcal{A}_{\Theta_N}(x_N)]|\\
			&\le \Vert \mathcal{A}_{\Theta_1}(x_1)-\mathcal{A}_{\Theta_1}(y_1) \Vert_{\ell_1}\Vert \mathcal{A}_{\Theta_2}(x_2) \Vert_{\ell_1}...\Vert \mathcal{A}_{\Theta_N}(x_N) \Vert_{\ell_1}.
		\end{aligned}
	\end{equation*}
	Note that $\sigma(\cdot)$ is Lipschitz continuous, i.e., $|\sigma(x) - \sigma(y)| \leq \kappa |x - y|$ holds for any $x, y$, and letting $y = 0$ derives $|\sigma(x)| \leq \kappa |x|$ since $\sigma(0) = \sin(0) = 0$. Thus, we have
	\begin{equation*}
		\begin{aligned}
			\|\mathcal{A}_{\Theta_2}(x_2)\|_{\ell_1} &= \| \mathbf{H}_d^{(2)} (\sigma(\mathbf{H}_{d-1}^{(2)} \cdots \mathbf{H}_2^{(2)} (\sigma(\mathbf{H}_1^{(2)} x_2))))\|_{\ell_1}\\
			&\leq \omega \|\sigma(\mathbf{H}_{d-1}^{(2)} \cdots \mathbf{H}_2^{(2)} (\sigma(\mathbf{H}_1^{(2)} x_2)))\|_{\ell_1}\\
			& \leq \omega \kappa \|\mathbf{H}_{d-1}^{(2)} \cdots \mathbf{H}_2^{(2)} (\sigma(\mathbf{H}_1^{(2)} x_2))\|_{\ell_1}\\
			&...\\
			& \leq \omega^d \kappa^{d-1} |x_2|,
		\end{aligned}
	\end{equation*}
	where $\{\mathbf{H}_i^{(2)}\}_{i=1}^d$ are the weight matrices of $\mathcal{A}_{\Theta_2}(\cdot)$. Similarly, we have $\|\mathcal{A}_{\Theta_n}(x_n)\|_{\ell_1} \leq \omega^d \kappa^{d-1} |x_n|$ for $n = 3,4,...,N$. Meanwhile, it holds that
	\begin{equation*}
		\begin{aligned}
			&\hspace{-1cm}\quad~\|\mathcal{A}_{\Theta_1}(x_1) - \mathcal{A}_{\Theta_1}(y_1)\|_{\ell_1} \\
			&\hspace{-1cm}= \|\mathbf{H}_d^{(1)} (\sigma(\mathbf{H}_{d-1}^{(1)} \cdots \mathbf{H}_2^{(1)} (\sigma(\mathbf{H}_1^{(1)} x_1)))) - \mathbf{H}_d^{(1)} (\sigma(\mathbf{H}_{d-1}^{(1)} \cdots \mathbf{H}_2^{(1)} (\sigma(\mathbf{H}_1^{(1)} y_1))))\|_{\ell_1}\\
			&\hspace{-1cm}= \|\mathbf{H}_d^{(1)} (\sigma(\mathbf{H}_{d-1}^{(1)} \cdots \mathbf{H}_2^{(1)} (\sigma(\mathbf{H}_1^{(1)} x_1))) - \sigma(\mathbf{H}_{d-1}^{(1)} \cdots \mathbf{H}_2^{(1)} (\sigma(\mathbf{H}_1^{(1)} y_1))))\|_{\ell_1}\\
			&\hspace{-1cm}\leq \omega \|\sigma(\mathbf{H}_{d-1}^{(1)} \cdots \mathbf{H}_2^{(1)} (\sigma(\mathbf{H}_1^{(1)} x_1))) - \sigma(\mathbf{H}_{d-1}^{(1)} \cdots \mathbf{H}_2^{(1)} (\sigma(\mathbf{H}_1^{(1)} y_1)))\|_{\ell_1}\\
			&\hspace{-1cm}\leq \omega \kappa \|\mathbf{H}_{d-1}^{(1)} \cdots \mathbf{H}_2^{(1)} (\sigma(\mathbf{H}_1^{(1)} x_1)) - \mathbf{H}_{d-1}^{(1)} \cdots \mathbf{H}_2^{(1)} (\sigma(\mathbf{H}_1^{(1)} y_1))\|_{\ell_1}\\
			&\hspace{-0cm}...\\
			&\hspace{-1cm}\leq \omega^d \kappa^{d-1} |x_1 - y_1|,
		\end{aligned}
	\end{equation*}
	where $\{\mathbf{H}_i^{(1)}\}_{i=1}^d$ denote the weight matrices of $\mathcal{A}_{\Theta_1}(\cdot)$. Combining the above inequalities, we have
	\begin{equation*}
		\begin{aligned}
			|f(x_1, x_2,..., x_N) - f(y_1, x_2,..., x_N)|  &\leq \omega^{Md} \kappa^{Nd-N} |x_2|... |x_N| |x_1 - x_2|\\
			&\leq \omega^{Nd} \kappa^{Nd-N} \zeta^{N-1} |x_1 - y_1|.	
		\end{aligned}
	\end{equation*}
	In a similar way, we can show that
	\begin{equation*}
		|f(x_1, x_2,..., x_N) - f(x_1, y_2,..., x_N)| \leq \omega^{Nd} \kappa^{Nd-N} \zeta^{N-1} |x_2 - y_2|,
	\end{equation*}
	\begin{equation*}
		|f(x_1, x_2,..., x_N) - f(x_1, x_2,..., y_N)| \leq \omega^{Nd} \kappa^{Nd-N} \zeta^{N-1} |x_N - y_N|.
	\end{equation*}
	Based on the above inequalities, we can deduce
	$$
	|f(\textbf{x}) - f(\textbf{y})|\leq \delta\Vert\textbf{x}-\textbf{y} \Vert,
	$$
	for any $\textbf{x}=(x_1, x_2,..., x_N), \textbf{y}=(y_1, y_2,..., y_N) \in D_f$, which completes the proof.
\end{proof}
	
	\section{Model, algorithm and convergence analysis}\label{E}
	In this section, we investigate the tensor recovery models within the IMTD framework, along with corresponding algorithms and convergence analysis.
	
	\subsection{The tensor reconstruction problems within IMTD framework}
	We first consider the case where the model involves only a single variable. Within the IMTD framework, the models can be generally formulated as
	\begin{equation}\label{D16}
		\min_{\Theta}~ F(\mathcal{A}_{\Theta}(\textbf{x});\mathcal{M})+\lambda\phi(\mathcal{A}_{\Theta}(\textbf{x})),
	\end{equation}
	where $\mathcal{M}$ is the observed data, $\mathcal{A}_{\Theta}(\cdot):=[\mathcal{A}_{\Theta_1}\mathcal{A}_{\Theta_2}...\mathcal{A}_{\Theta_N}](\cdot)$ is a tensor function representing the reconstructed tensor in IMTD form, $\Theta:=(\Theta_1, \Theta_2, ...,\Theta_N)$ denotes the network parameter, \textbf{x} is the coordinates of  tensor, $F(\cdot)$  represents the fidelity term and $\phi(\cdot)$ is the regularization term, $\lambda>0$ is a weight parameter.
	We also consider the  tensor reconstruction problem with two variates, which can  be formulated as:
	\begin{equation}\label{D166}
			\min_{\Theta, \mathcal{E}}~ G(\Theta, \mathcal{E}):= F(\mathcal{A}_{\Theta}(\textbf{x}), \mathcal{E};\mathcal{M})+\lambda\phi(\mathcal{A}_{\Theta}(\textbf{x}))+\gamma\psi(\mathcal{E}),
	\end{equation}
	where $\mathcal{E}$  is typically treated as a perturbation variable, $\psi(\mathcal{E})$ is a regularization term, and $\gamma>0$ is a weight parameter. We assume that $F$ is continuously differentiable,  $\phi$ is lower semi-continious (LSC), $\psi$ is LSC and coercive. Next, we present two classical reconstruction problems under the framework of  (\ref{D16}) and (\ref{D166}).
	
	\textit{1) Robust Tensor Completion (RTC).} This
	 is a classical problem on the original meshgrid tensor, aiming to recover the underlying low rank tensor from partial observations. Specifically, given an observed tensor $\mathcal{M}$ corrupted by sparse noise and an index set $\Omega$ of observed entries, the RTC problem can be formulated within the framework (\ref{D166}) as follows:
	\begin{equation}\label{D17}
		\begin{aligned}
			\min_{\Theta, \mathcal{E}}~ \Vert P_{\Omega}(\mathcal{A}_{\Theta}(\textbf{x})+\mathcal{E}-\mathcal{M}) \Vert_F^2+\lambda\Vert \mathcal{A}_{\Theta}(\textbf{x}) \Vert_{\rm TV\ell_1}+\gamma\Vert\mathcal{E}\Vert_{\ell_1},
		\end{aligned}
	\end{equation}
	where  $\Vert \cdot \Vert_{\rm TV\ell_1}:=\Vert \mathcal{D}(\cdot) \Vert_{\ell_1}$ denotes the Total Variation (TV) regularization in the  $\ell_1$ sense,  $\mathcal{D}(\cdot)$ represents a discrete difference operator that computes the first-order  differences along each mode of the tensor \cite{Bai2016}.

	\textit{2) Point Cloud Upsampling} (PCU)
	refers to the process of converting a sparse point cloud into a dense one, which can benefit subsequent applications \cite{Luo2024}. 
	For a sparse point cloud $\mathbf{P} \in \mathbb{R}^{p \times 3}$, where $p$ denotes the number of points, we use $\Omega := \{\mathbf{P}_{(m,:)}\}_{m=1}^p$ to denote the observed set, and adopt  signed distance function (SDF) \cite{SDF} to learn a continuous representation from the sparse point cloud. The PCU model within the IMTD framework can be expressed as
	\begin{equation*}
		\begin{aligned}
			&\min_{s} ~ \sum_{\mathbf{x} \in \Omega} |s(\mathbf{x})| + \lambda \int_{\mathbb{R}^3} \left|\left\|\frac{\partial s(\mathbf{x})}{\partial \mathbf{x}}\right\|_F^2 - 1\right| d\mathbf{x} 
			+ \gamma \int_{\mathbb{R}^3 \setminus \Omega} \exp(-|s(\mathbf{x})|) d\mathbf{v},\\
			&~s.t. ~~ s(\textbf{x})=\mathcal{A}_{\Theta}(\textbf{x}):=[\mathcal{A}_{\Theta_1}\mathcal{A}_{\Theta_2}...\mathcal{A}_{\Theta_N}](\textbf{x}),
		\end{aligned}
	\end{equation*}
	where $s(\cdot): \mathbb{R}^3 \to \mathbb{R}$ denotes the SDF,  $\lambda$ and $\gamma$ are weight parameters. The first term enforces the SDF to be zero on the observed points, which represents the underlying surface of the point cloud. The second term encourages the magnitude of the SDF gradients to be unity everywhere, promoting well-behaved geometry. The third term penalizes deviations of the SDF values from zero outside the observed point set \cite{INR, Luo2024}.  To generate a dense point cloud, we sample points $\mathbf{v}$ uniformly across the space such that $|s(\mathbf{v})| < \tau$, where $\tau$ is a predefined threshold. These sampled points constitute the desired upsampling result.
	
	\subsection{Algorithm and convergence analysis}
	Since $\Theta$ is the true optimization variable in $\mathcal{A}_{\Theta}(\textbf{x})$, we  denote $\mathcal{A}_{\Theta}(\textbf{x})$ as $\mathcal{A}_\textbf{x}(\Theta)$ or $\mathcal{A}_{\Theta}$.
	To solve the  model (\ref{D16}), we can  define the loss function as
	$
	 F(\mathcal{A}_{\Theta};\mathcal{M})+\lambda\phi(\mathcal{A}_{\Theta}),
	$
	and directly optimize  $\Theta$ using adaptive moment estimation (Adam) \cite{ADAM} algorithm. However, for the  model (\ref{D166}), directly optimizing a multi-variable loss function may cause conflicts, leading to degraded performance.
	Therefore,  we employ the  Proximal Alternating Least Squares (PALS) algorithm to solve the model.  The  iterative scheme is
	\begin{align}
			\vspace{-3pt}
		\Theta^{k+1} &\in \mathrm{arg}\min_{\Theta}~ F(\mathcal{A}_{\Theta}, \mathcal{E}^k;\mathcal{M})+\lambda\phi(\mathcal{A}_{\Theta})+\frac{\eta}{2}\Vert\mathcal{A}_{\Theta}-\mathcal{A}_{\Theta^k}\Vert_F^2, \label{D18}\\
		\mathcal{E}^{k+1} &\in \mathrm{arg}\min_{\mathcal{E}}~ \gamma\psi(\mathcal{E})+F(\mathcal{A}_{\Theta^{k+1}}, \mathcal{E};\mathcal{M})+\frac{\eta}{2}\Vert\mathcal{E}-\mathcal{E}^k\Vert_F^2, \label{D19}
		\vspace{-3pt}
	\end{align}
	where $\eta>0 $ is a  proximal parameter. It is worth noting that we add \( \frac{\eta}{2}\Vert\mathcal{A}_{\Theta}-\mathcal{A}_{\Theta^k}\Vert_F^2 \) rather than \( \frac{\eta}{2}\Vert\Theta-\Theta^k\Vert_F^2 \) to the $\Theta$ subproblem.
	The pseudocode of PAO algorithm for solving the  model (\ref{D166}) is summarized in Algorithm \ref{AL1}.
	\vspace{-3pt}
	\begin{algorithm}
		\caption{ PAO algorithm for solving the  model (\ref{D166})}
		{\bf Input:} ${\mathcal{M}}$,  $ \lambda, \gamma, \eta$.  
		
		\textbf{Initialize:} $\Theta^{0}$, ${\mathcal{E}}^{0}$. Set $k=0$.
		
		\textbf{Step1:} Compute $\Theta^{k+1}$  by  solving the subproblem (\ref{D18});
		
		\textbf{Step2:} Compute $\mathcal{E}^{k+1}$  by  solving the subproblem (\ref{D19});
		
		\textbf{Step3:} If a termination criterion is not met, set $k = k+1$  and return to step 1;
		
		{\bf Output:} $\Theta^{k+1}$, $\mathcal{A}_{\Theta^{k+1}}$.
		\label{AL1}
	\end{algorithm}
	\vspace{-2pt}
	
	Next, we provide a convergence analysis for Algorithm~\ref{AL1}. Since the objective function in MITD-based models may lack the Kurdyka–Łojasiewicz (KL) property\cite{KL3, Attouch} due to complex landscapes induced by deep architectures and nonlinear activations, we adopt a KL-free analysis framework to ensure broader applicability.   Define
	$$
	\widehat{G}(\mathcal{A},\mathcal{E}):=F(\mathcal{A}, \mathcal{E};\mathcal{M})+\lambda\phi(\mathcal{A})+\gamma\psi(\mathcal{E}).
	$$
	Then $G(\Theta, \mathcal{E})$ is the composite function of $\widehat{G}(\mathcal{A}, \mathcal{E})$ and $\mathcal{A}=\mathcal{A}_\textbf{x}(\Theta)$.

	\begin{theorem}\label{T1}
		Assume the hypotheses of Theorem \ref{T6} hold. Then, any limit point $(\Theta^{*}, \mathcal{E}^*)$ of the sequence $\{(\Theta^{k}, \mathcal{E}^k)\}$ generated by Algorithm \ref{AL1} is a critical point of $ G $.  If the map $\Theta \mapsto \mathcal{A}_\Theta$ is a submersion at $\Theta^*$, i.e., the Jacobian has full row rank, then $(\mathcal{A}_{\Theta^*}, \mathcal{E}^*)$ is also a critical point of $ \widehat{G} $, and  $\{(\mathcal{A}_{\Theta^{k}}, \mathcal{E}^k)\}$ cannot oscillate between distinct  critical points of $\widehat{G}$.  Moreover, if all critical points of $\widehat{G}$ are isolated, the sequence $\{(\mathcal{A}_{\Theta^{k}}, \mathcal{E}^k)\}$ converges globally to a single critical point of $\widehat{G}$.
	\end{theorem}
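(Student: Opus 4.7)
The plan is to adapt the standard template for proximal alternating methods to the KL-free setting, exploiting the composite structure $G(\Theta, \mathcal{E}) = \widehat{G}(\mathcal{A}_\mathbf{x}(\Theta), \mathcal{E})$ and the smoothness of the network map $\Theta \mapsto \mathcal{A}_\Theta$. The argument proceeds in four stages: a sufficient decrease inequality, boundedness and existence of limit points, passage of the first-order optimality conditions to the limit, and a connectedness argument for non-oscillation and global convergence.

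First, I would exploit that the proximal term in the $\Theta$-subproblem is $\tfrac{\eta}{2}\|\mathcal{A}_\Theta - \mathcal{A}_{\Theta^k}\|_F^2$ rather than $\tfrac{\eta}{2}\|\Theta - \Theta^k\|^2$. Substituting $\Theta^k$ as a feasible point in~(\ref{D18}) and $\mathcal{E}^k$ in~(\ref{D19}) and using the definition of a minimizer yields
\begin{equation*}
G(\Theta^{k+1}, \mathcal{E}^{k+1}) + \tfrac{\eta}{2}\|\mathcal{A}_{\Theta^{k+1}} - \mathcal{A}_{\Theta^k}\|_F^2 + \tfrac{\eta}{2}\|\mathcal{E}^{k+1} - \mathcal{E}^k\|_F^2 \le G(\Theta^k, \mathcal{E}^k).
\end{equation*}
Since $F$ is nonnegative and $\phi, \psi$ are lower semicontinuous and may be assumed bounded below on the admissible set, $G$ is bounded below, so telescoping gives $\sum_k \bigl(\|\mathcal{A}_{\Theta^{k+1}} - \mathcal{A}_{\Theta^k}\|_F^2 + \|\mathcal{E}^{k+1} - \mathcal{E}^k\|_F^2\bigr) < \infty$ and hence $\mathcal{A}_{\Theta^{k+1}} - \mathcal{A}_{\Theta^k} \to 0$ and $\mathcal{E}^{k+1} - \mathcal{E}^k \to 0$. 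Coercivity of $\psi$ ensures boundedness of $\{\mathcal{E}^k\}$, while Theorem~\ref{T6}, together with the bounded-weight hypothesis it assumes, keeps $\{\mathcal{A}_{\Theta^k}\}$ in a compact set, so the sequence has accumulation points in the $(\mathcal{A}, \mathcal{E})$-space.

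Second, I would write out the first-order optimality conditions for the two subproblems. Denoting by $J_\Theta$ the Jacobian of $\Theta \mapsto \mathcal{A}_\Theta$, they read
\begin{equation*}
0 \in J_{\Theta^{k+1}}^{\top}\!\left[\nabla_\mathcal{A} F(\mathcal{A}_{\Theta^{k+1}}, \mathcal{E}^k;\mathcal{M}) + \lambda\,\partial\phi(\mathcal{A}_{\Theta^{k+1}}) + \eta(\mathcal{A}_{\Theta^{k+1}} - \mathcal{A}_{\Theta^k})\right],
\end{equation*}
\begin{equation*}
0 \in \nabla_\mathcal{E} F(\mathcal{A}_{\Theta^{k+1}}, \mathcal{E}^{k+1};\mathcal{M}) + \gamma\,\partial\psi(\mathcal{E}^{k+1}) + \eta(\mathcal{E}^{k+1} - \mathcal{E}^k).
\end{equation*}
Along a subsequence with $(\Theta^k, \mathcal{E}^k) \to (\Theta^*, \mathcal{E}^*)$, continuity of $\nabla F$ and of $J_\Theta$ (the MLPs are $C^1$ under the hypotheses of Theorem~\ref{T6}), closedness of the limiting subdifferentials of $\phi$ and $\psi$, and the vanishing residuals $\eta(\mathcal{A}_{\Theta^{k+1}} - \mathcal{A}_{\Theta^k})$, $\eta(\mathcal{E}^{k+1} - \mathcal{E}^k)$ pass the inclusions to the limit; the chain rule for the smooth outer composition then delivers $0 \in \partial G(\Theta^*, \mathcal{E}^*)$. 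Under the submersion hypothesis, $J_{\Theta^*}$ has full row rank so $\ker J_{\Theta^*}^{\top} = \{0\}$, which lets one strip $J_{\Theta^*}^{\top}$ from the first inclusion to obtain $0 \in \nabla_\mathcal{A} F(\mathcal{A}_{\Theta^*}, \mathcal{E}^*;\mathcal{M}) + \lambda\,\partial\phi(\mathcal{A}_{\Theta^*})$, i.e.\ $(\mathcal{A}_{\Theta^*}, \mathcal{E}^*)$ is a critical point of $\widehat{G}$.

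For non-oscillation and global convergence I would invoke the Ostrowski lemma: a bounded sequence whose consecutive increments vanish has a nonempty, compact, and connected $\omega$-limit set. Applied to $\{(\mathcal{A}_{\Theta^k}, \mathcal{E}^k)\}$, whose increments tend to zero by the first step, this limit set lies in the critical set of $\widehat{G}$ by the second step; a connected subset of a set of isolated points must be a singleton, so if all critical points of $\widehat{G}$ are isolated the whole sequence converges, and in any case it cannot oscillate between two distinct isolated limits. The main obstacle is the third step: verifying that the subdifferential chain rule for the nonsmooth composition holds in the limiting sense and that the submersion condition indeed allows cancellation of $J_{\Theta^*}^{\top}$ without losing information. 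Both rely crucially on the $C^1$-smoothness of $\Theta \mapsto \mathcal{A}_\Theta$ guaranteed by the hypotheses of Theorem~\ref{T6}, and on a mild qualification condition that is automatic here because the outer mapping's smooth part is continuously differentiable.
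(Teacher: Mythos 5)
Your proposal follows essentially the same route as the paper's proof: the sufficient-decrease inequality obtained by testing the subproblems at the previous iterates (with the proximal term measured in $\mathcal{A}_\Theta$-space), summability of the increments, boundedness via coercivity of $\psi$ and the bounded-weight hypothesis of Theorem \ref{T6}, passage of the subproblem optimality inclusions to the limit along a subsequence, and removal of $\mathcal{J}_{*}^\top$ under the submersion assumption to transfer criticality from $G$ to $\widehat{G}$. The only substantive difference is the final step, where you invoke Ostrowski's connectedness lemma for the $\omega$-limit set, whereas the paper argues by contradiction that oscillation would keep $a_k+e_k$ bounded away from zero; your formulation is actually the more watertight one, since oscillation between two distinct critical points via many small intermediate steps is ruled out only by the connectedness of the limit set, not by any single increment staying large.
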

	\begin{proof}
		Since $\Theta^{k+1}$ and $\mathcal{E}^{k+1}$ are optimal solutions of (\ref{D18}) and (\ref{D19}), we have
		\begin{equation}\label{E1}
			\begin{aligned}
				G(\mathcal{A}_{\Theta^{k+1}}, \mathcal{E}^k) &\leq G(\mathcal{A}_{\Theta^{k}}, \mathcal{E}^k) - \frac{\eta}{2} \Vert\mathcal{A}_{\Theta^{k+1}} - \mathcal{A}_{\Theta^{k}}\Vert_F^2, \\
				G(\mathcal{A}_{\Theta^{k+1}}, \mathcal{E}^{k+1}) &\leq G(\mathcal{A}_{\Theta^{k+1}}, \mathcal{E}^k) - \frac{\eta}{2} \Vert\mathcal{E}^{k+1} - \mathcal{E}^k\Vert_F^2.
			\end{aligned}
		\end{equation}
		For convenience, we denote $G^k=G(\mathcal{A}_{\Theta^{k}}, \mathcal{E}^k)$. Then, from (\ref{E1}), one has
		\begin{equation}\label{D20}
			\begin{aligned}
				G^{k+1} \leq G^k - \frac{\eta}{2} (\Vert\mathcal{A}_{\Theta^{k+1}} - \mathcal{A}_{\Theta^{k}}\Vert_F^2 + \Vert\mathcal{E}^{k+1} - \mathcal{E}^k\Vert_F^2).
			\end{aligned}
		\end{equation}
		Define the Lyapunov function:
		$
		V_k := G^k + \frac{\eta}{2} a_k + \frac{\eta}{2} e_k,
		$
		where \(a_k = \|\mathcal{A}_{\Theta^{k}} - \mathcal{A}_{\Theta^{k-1}}\|_F^2\), \(e_k = \|\mathcal{E}^k - \mathcal{E}^{k-1}\|_F^2\).
		Then, by combining with (\ref{D20}), we analyze the difference:
		\begin{equation}\label{D21}
			\begin{aligned}
				V_{k+1} - V_k &= G^{k+1} - G^k + \frac{\eta(a_{k+1} - a_k+e_{k+1} - e_k))}{2} \le-\frac{\eta(a_k+e_k)}{2}<0.
			\end{aligned}
		\end{equation}
		This implies that \( V_k \) is monotonically decreasing. Therefore, \( V_k \) converges to some finite limit \( V^* \) since it is bounded below. By summing (\ref{D21}) over $k$, we obtain
		$
		\sum_{k=1}^{\infty} (a_k+e_k) \leq \frac{2}{\eta}(V_1 - V^*) < \infty,
		$
		which implies $a_k+e_k \to 0$, i.e.,
		$
		\|\mathcal{A}_{\Theta^{k+1}} - \mathcal{A}_{\Theta^{k}}\|_F \to 0, ~ \|\mathcal{E}^{k+1} - \mathcal{E}^k\|_F \to 0.
		$
		From the definition of $V^k$, we have $G^k\to V^*$. According to the assumption of Theorem \ref{T6}, the network parameter $\Theta$ is bounded.  Thus, the sequence $\{(\Theta^{k}, \mathcal{E}^k)\}$ generated by Algorithm \ref{AL1} is also bounded since $G$  is coercive in $\mathcal{E}$.   Therefore, there exists a subsequence $\{(\Theta^{t}, \mathcal{E}^t)\}$ that converges to a limit point, denoted as $(\Theta^{*}, \mathcal{E}^*)$.  From (\ref{D18}) and (\ref{D19}), one has:
		\begin{equation}\label{E2}
			\begin{aligned}
				0 &\in [\nabla_{\mathcal{A}_{\Theta}} F(\mathcal{A}_{\Theta^{t+1}}, \mathcal{E}^t) + \lambda\partial \phi(\mathcal{A}_{\Theta^{t+1}}) + \eta \mathcal{R}^t]^\top \mathcal{J}_{t+1},\\
				0 &\in \nabla_{\mathcal{E}} F(\mathcal{A}_{\Theta^{t+1}}, \mathcal{E}^{t+1})+\gamma \partial \psi(\mathcal{E}^{t+1}) + \eta (\mathcal{E}^{t+1} - \mathcal{E}^t),
			\end{aligned}
		\end{equation}
		where  $\mathcal{J}_{t+1}:=\left. \frac{\partial \mathcal{A}_\Theta}{\partial \Theta} \right|_{\Theta = \Theta^{t+1}}$, $\mathcal{R}^t:=\mathcal{A}_{\Theta^{t+1}} - \mathcal{A}_{\Theta^{t}}$. The derivative computation is performed in the convention of vectorizing the tensor variables. Then we  obtain:
		\begin{equation*}
			\begin{aligned}
				[\nabla_{\mathcal{A}_{\Theta}} F(\mathcal{A}_{\Theta^{t+1}}, \mathcal{E}^{t+1})-\nabla_{\mathcal{A}_{\Theta}} F(\mathcal{A}_{\Theta^{t+1}}, \mathcal{E}^{t})+\eta\mathcal{R}^t]^\top \mathcal{J}_{t+1} &\in \partial G_{\Theta}(\mathcal{A}_{\Theta^{t+1}}, \mathcal{E}^{t+1}),\\
				\eta(\mathcal{E}^t-\mathcal{E}^{t+1}) &\in \partial G_{\mathcal{E}}(\mathcal{X}_f^{t+1}, \mathcal{E}^{t+1}).
			\end{aligned}
		\end{equation*}
		Taking $k\to +\infty$ on both sides, we obtain $
			0\in \partial G_{\Theta}(\Theta^{*}, \mathcal{E}^{*}),~ \text{and} ~ 0\in \partial G_{\mathcal{E}}(\Theta^{*}, \mathcal{E}^{*})
		$, which implies that $(\Theta^{*}, \mathcal{E}^*)$ is a critical point of  $ G $. Since  $\mathcal{A}_\Theta$ is a submersion at $\Theta^*$, i.e., $\mathcal{J}_{*}:=\left. \frac{\partial \mathcal{A}_\Theta}{\partial \Theta} \right|_{\Theta = \Theta^{*}}$ is full row rank, we can also obtain $0\in \partial \widehat{G}_{\mathcal{A}}(\mathcal{A}_{\Theta^{*}},\mathcal{E}^{*})$ and $0\in \partial \widehat{G}_{\mathcal{E}}(\mathcal{A}_{\Theta^{*}}, \mathcal{E}^{*})$. Therefore, $(\mathcal{A}_{\Theta^*}, \mathcal{E}^*)$ is also a critical point of $ \widehat{G} $.
		
		Next, we prove that the sequence $\{(\mathcal{A}_{\Theta^k}, \mathcal{E}^k)\}$ does not oscillate among limit points. By contradiction, if it did, then $ a_k + e_k $ would remain greater than some constant $ C $ indefinitely. From (\ref{D21}), we know that in each iteration, $ V_{k+1} $ decreases by at least $ C \eta / 2 $, which contradicts the fact that $ V_k $ is bounded below. If critical points are all isolated, the sequence eventually enters and stays in a neighborhood of one such point, i.e., it converges globally to that critical point.
	\end{proof}
	\begin{remark}
		The parameter sequence $\{\Theta^k\}$ may fail to converge due to the non-uniqueness of network parameters, i.e., Multiple $\Theta$ can yield the same $\mathcal{A}_\Theta$. However, our primary interest lies in the convergence of the  output $\{\mathcal{A}_{\Theta^k}\}$. Even if $\Theta^k$ oscillates among equivalent parameterizations, the corresponding outputs remain stable, which is sufficient for practical performance and structural recovery.
	\end{remark}
	
	\section{Experiments}\label{NN}
	In this section, we conducted extensive numerical experiments to verify the effectiveness of the proposed IMTD, including  robust tensor completion (grid-structured data) and point cloud upsampling (non-grid data).  The  IMTD is trained on a NVIDIA GeForce RTX 4060 GPU, and we run the compared codes  in MATLAB R2023a on a 12th Gen Intel® Core™ i5-12450H processor at 2.00 GHz.
	
	\subsection{Parameter settings} 
	The  crucial parameters are the Multiple rank in  IMTD. Although the Multiple rank of a tensor $\mathcal{X} \in \Re^{I_1 \times I_2 \times \cdots \times I_N}$ cannot be determined directly, a reasonable range can be estimated. According to Theorem \ref{T11},  we recommend first roughly determining  GTriRank($\mathcal{X}$), and then gradually adjust it downward to select the  Multiple rank. Specifically, we propose the following lower and upper bounds for estimating  GTriRank($\mathcal{X}$):
	$$
	r_{min}:=\max\{I_1^{\frac{1}{N-1}}, I_2^{\frac{1}{N-1}},..., I_N^{\frac{1}{N-1}}\}/3,\quad r_{max}:=\frac{2}{3}\sqrt[N-1]{\frac{I_1I_2...I_N}{I_1+I_2+...+I_N}}.
	$$

	For a point cloud dataset $\mathcal{X}$ consisting of $N$ points, i.e., $\mathcal{X}\in \Re^{N\times 3}$, we recommend  $r = N^{\frac{1}{3}}$
	as an upper bound for the choice of MtpRank($\mathcal{X}$).

	\subsection{Comparison  on robust tensor completion (RTC) task}
	In this subsection, we compare IMTD with several  RTC methods, including: TriD \cite{TriB}, FCTN \cite{FCTN}, NTRC \cite{NTRC}, BCNRTC \cite{Zhao2} and RTCDLN \cite{RTCDLN}. All these methods are evaluated on both color images and color videos. Specifically, the experiments are conducted on two color images: Pepper ($512 \times 512 \times 3$) and Flower ($321 \times 481 \times 3$), along with two color videos: Ocean ($112 \times 160 \times 3 \times 32$) and Man ($144 \times 176 \times 3 \times 51$). All images and videos are corrupted by (i) randomly removing a fraction of entries (i.e., missing data) and (ii) adding salt-and-pepper noise to the observed entries. We set the sampling rates ($sr$) to $ 0.2, 0.4, $ and $ 0.6 $, and consider noise levels of $ \sigma \in \{0.2, 0.4, 0.6\} $.
	
	\subsubsection{Color images}
	Table \ref{T2} presents the reconstruction performance of various methods   for the color images.
	\begin{table}[h]\small
		\centering
		\renewcommand\arraystretch{1}
		\vspace{-10pt}
		\caption{\small PSNR (dB) values for restoring results of different methods for colour images corrupted by sample loss and salt-and-pepper noise.}
		\label{T2}
		\vspace{-5pt}
		\begin{tabular}{cccccccc}
			\hline
			Sample & Noise  & \multicolumn{6}{c}{Pepper}  \\ \cline{3-8} 
			rates (\%)& level (\%)  & TriD & FCTN & NTRC & BCNRTC  & RTCDLN& IMTD \\ \hline
			\multirow{3}{*}{20} & 20 & 18.86 & 19.62 & 19.76 & 21.17  & \underline{22.86} &  \textbf{28.65} \\
			& 40 & 16.03 & 16.36 & 16.95 & 17.49  & \underline{18.83} & \textbf{25.13} \\
			& 60 & 12.62 & 12.89 & 13.24 &  13.92 & \underline{14.26}& \textbf{21.89} \\ \cline{3-8}
			\multirow{3}{*}{40} & 20 &23.58 &24.27 &23.98 & 25.96 &\underline{26.58} &\textbf{31.27}  \\
			& 40 & 19.24 &20.16 & 19.70 &21.74 &\underline{22.17} & \textbf{28.08}\\
			& 60 & 15.63&15.87 &16.24 & 16.09 &\underline{16.81} & \textbf{24.17}\\\cline{3-8}
			\multirow{3}{*}{60}& 20 &27.75 &28.83 & 29.36 & \underline{30.92} & 29.85 &\textbf{32.86} \\
			& 40 &22.47 &23.72 &23.90 &\underline{26.16} & 25.97& \textbf{29.62} \\
			& 60 & 18.05 & 18.79&19.14 & \underline{20.79} & 18.85 & \textbf{24.79} \\\hline
			Sample & Noise  & \multicolumn{6}{c}{Flower}  \\ \cline{3-8} 
			rates (\%)& level (\%)  & TriD & FCTN & NTRC & BCNRTC  & RTCDLN & IMTD \\ \hline
			\multirow{3}{*}{20} & 20 & 19.15 & 20.71 & 21.35 & 22.07  & \underline{22.60} &  \textbf{26.75} \\
			& 40 &  15.20 & 15.45 & 16.71&  17.23 & \underline{19.82} & \textbf{24.67} \\
			& 60 & 10.63 & 11.37 & 11.42 & 12.48  & \underline{14.76} & \textbf{20.98} \\ \cline{3-8}
			\multirow{3}{*}{40} & 20 &22.49 & 23.89& 24.64 & \underline{26.02} &25.54 & \textbf{28.91} \\
			& 40 &17.71 & 18.74&19.51 &21.27 &\underline{21.66} &  \textbf{25.48}\\
			& 60 &13.46 &14.10 &14.84 &15.35 &\underline{15.94} &\textbf{22.14} \\\cline{3-8}
			\multirow{3}{*}{60}&20 & 25.01 &26.95 &27.55 & \underline{29.07} &28.12  & \textbf{30.42}\\
			& 40 &21.90 &22.43 &23.42 & \underline{24.17} &23.20 & \textbf{27.38} \\
			& 60 &17.15 &17.80 &18.37 & \underline{19.31} & 17.45& \textbf{23.69} \\\hline
		\end{tabular}
		\vspace{-5pt}
	\end{table}
	As shown in the table, the IMTD-based RTC method consistently outperforms all competing methods across all test settings. Notably, its performance advantage becomes even more significant under conditions of low sampling rates and high noise levels. Taking the `Pepper' dataset as an example, when the sampling rate  is 0.2 and the noise level  is 0.6, the PSNR of the image reconstructed by IMTD is 7.6 dB higher than that achieved by the second-best method, RTCDLN. This is mainly attributed to the robustness of the Multiple decomposition structure and the strong representation capability of the implicit neural network for the data. Moreover, TriD, FCTN and NTRC achieve very similar performance on both datasets, primarily because these methods exhibit comparable expressiveness for third-order data. TriD further restricts the short sides of the factor tensors to be identical, reducing flexibility and resulting in performance  inferior to other methods. As the sampling rate increases and the noise level decreases, the observed data become more informative, allowing traditional low-rank RTC models to better recover the underlying structure. Consequently, their performance improves significantly. Both BCNRTC and RTCDLN are based on the tensor nuclear norm (TNN): the former enhances TNN through a non-convex correction, while the latter introduces a  transform domain.   Their superior performance demonstrates the effectiveness of TNN-based RTC methods. However, their  performance still lags behind that of IMTD, particularly under strong interference conditions. The average PSNR  under three noise levels are displayed in Figure \ref{CF4}, from which we can more intuitively perceive the advantages of the IMTD-based RTC method.
	\begin{figure}
		\centering
		\includegraphics[width=12.7cm]{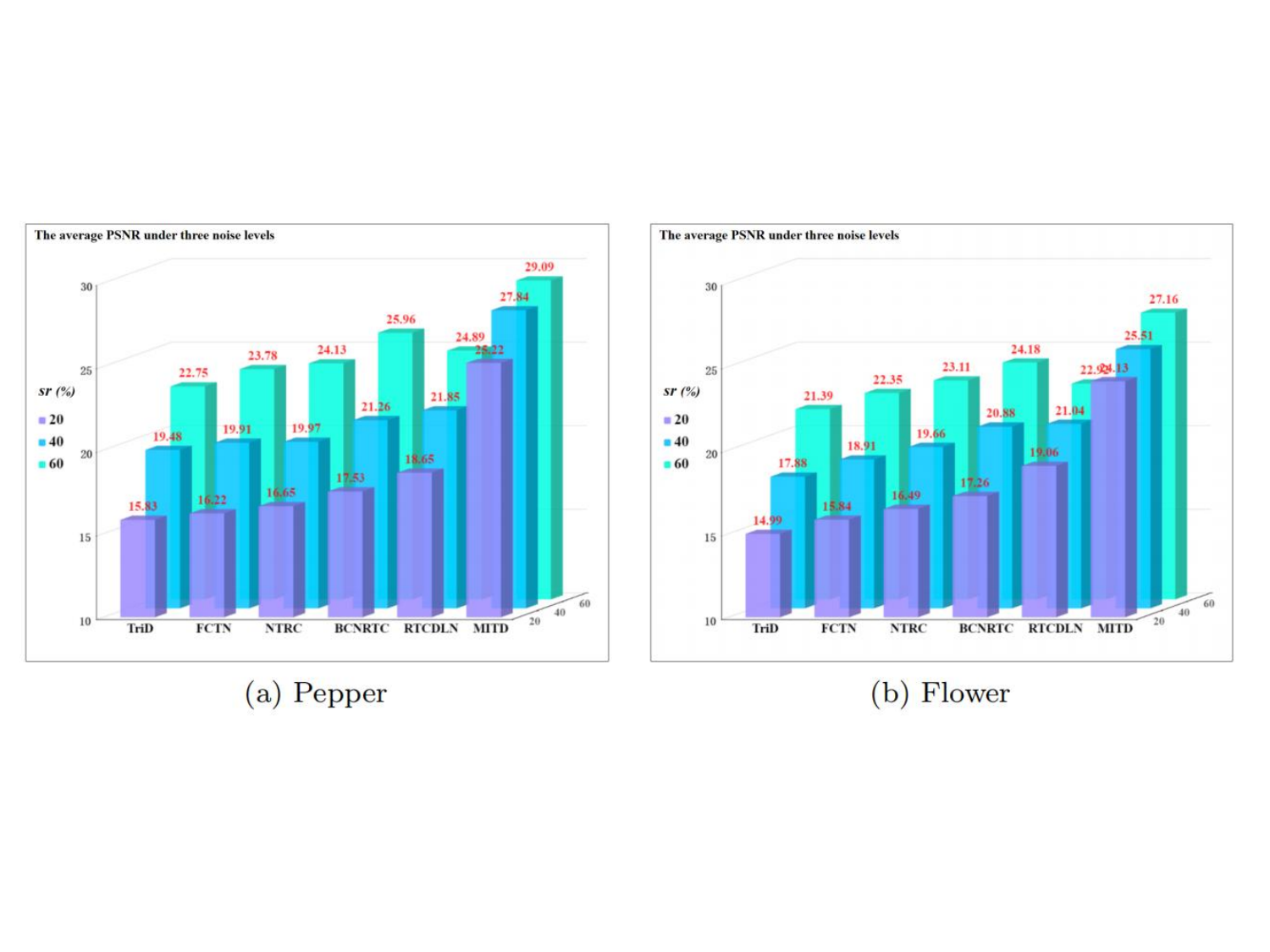}

		\vspace{-3pt}
		\caption{\small  The average PSNR  under three noise levels at different sampling rates.   }
		\label{CF4}
		\vspace{-15pt}
	\end{figure}
	Figure \ref{fig64} shows the recovered  images, where
	the region marked by  blue box is magnified and displayed in the upper left corner.
	
	\begin{figure*}[htbp]
		\vspace{-2pt}
		\centering
		
		\captionsetup[subfloat]{labelsep=none,format=plain,labelformat=empty}
		\includegraphics[width=12.5cm]{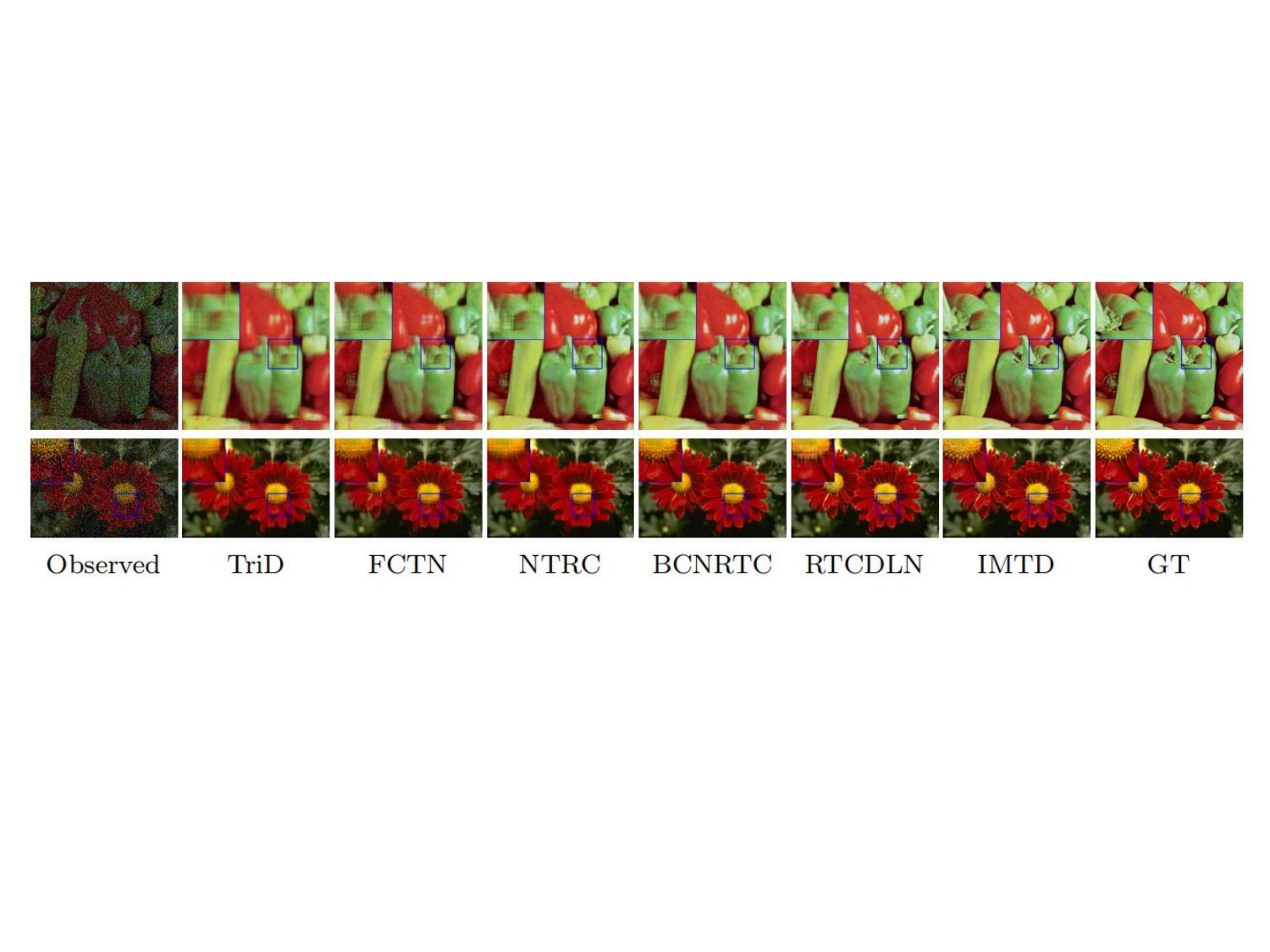}

		\vspace{-4pt}
		\caption{\small The visual comparison of restoration results on the `Pepper' (sr=0.4, $\sigma$=0.2) and `Flower' (sr=0.6, $\sigma$=0.2) datasets for each method. \tiny \label{fig64}}
	\end{figure*}
	
	\vspace{-15pt}
	As shown in the figure, the IMTD-based method recovers corrupted color images more effectively than the other compared approaches, producing visually clearer results. For example, in the blue-marked region of the `pepper' image that contains relatively complex textures, the low-rank RTC methods tend to oversmooth these intricate details when filling in missing values and suppressing noise. This results in distortions in the complex regions of the image. In contrast, the IMTD-based approach can better preserve and recover fine image details by implicitly learning the underlying functional distribution  of the data. For the `Flower' dataset, BCNRTC also demonstrates strong reconstruction performance in localized regions. This is primarily due to the non-convex surrogate of the TNN and $\ell_1$ norm, which allows for better preservation of the image's high-frequency details. However, achieving such performance typically requires careful and intricate parameter tuning.

	\subsubsection{Color videos}
	A color video can be represented as a fourth-order tensor of dimensions $W \times H \times 3 \times S$, where $W$ and $H$ denote the spatial width and height of each frame, $3$ corresponds to the three color channels (e.g., RGB), and $S$ represents the total number of frames in the video. For  methods that can only handle third-order tensors (TriD, BCNRTC, and RTCDLN), we reshape the  data into a third-order tensor with size W × H × (3S), where 3S combines the color channels across all frames. Figure \ref{CF5} presents the average PSNR values of the reconstructed images  across three noise levels in various sampling rates.

	\begin{figure}
		\centering
		\subfloat[Ocean]{\includegraphics[width=6cm]{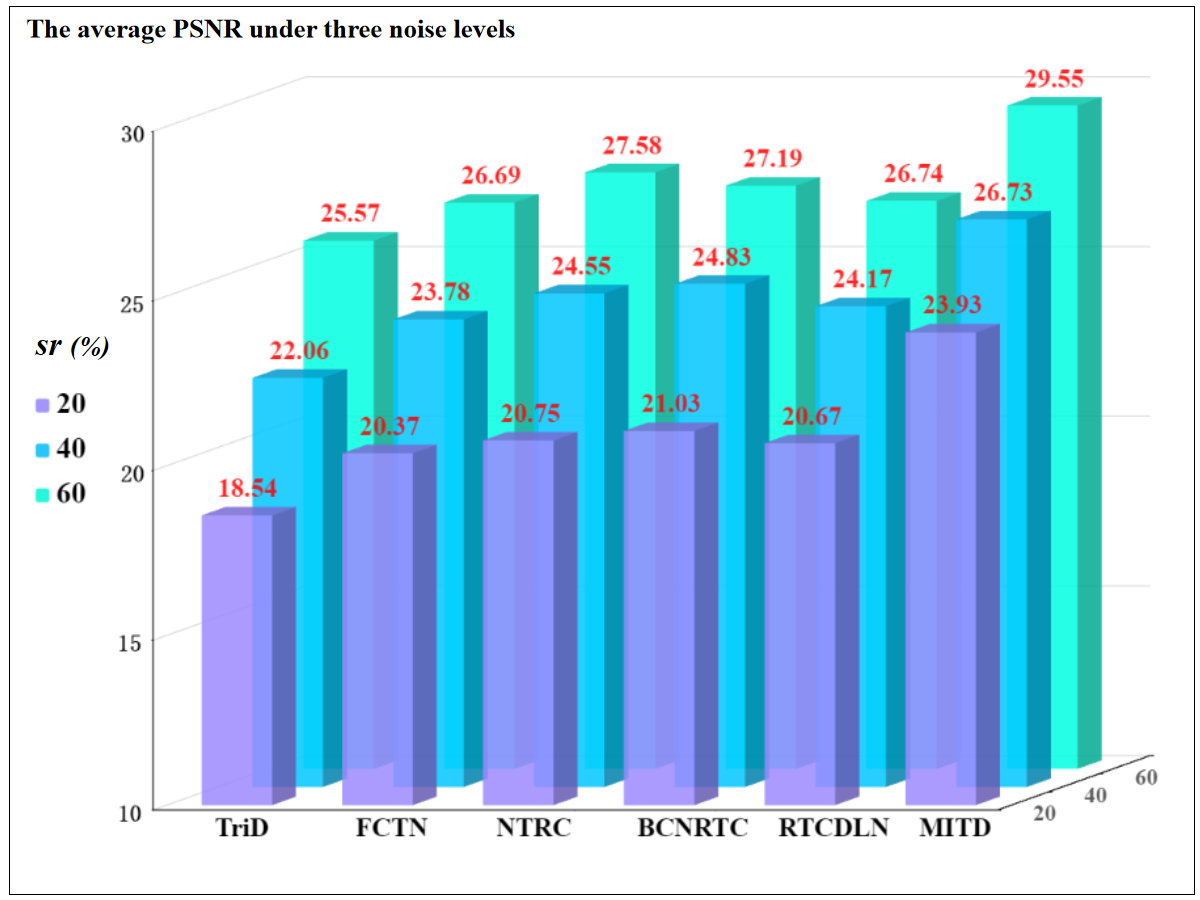}}\quad
		\subfloat[Man]{\includegraphics[width=6cm]{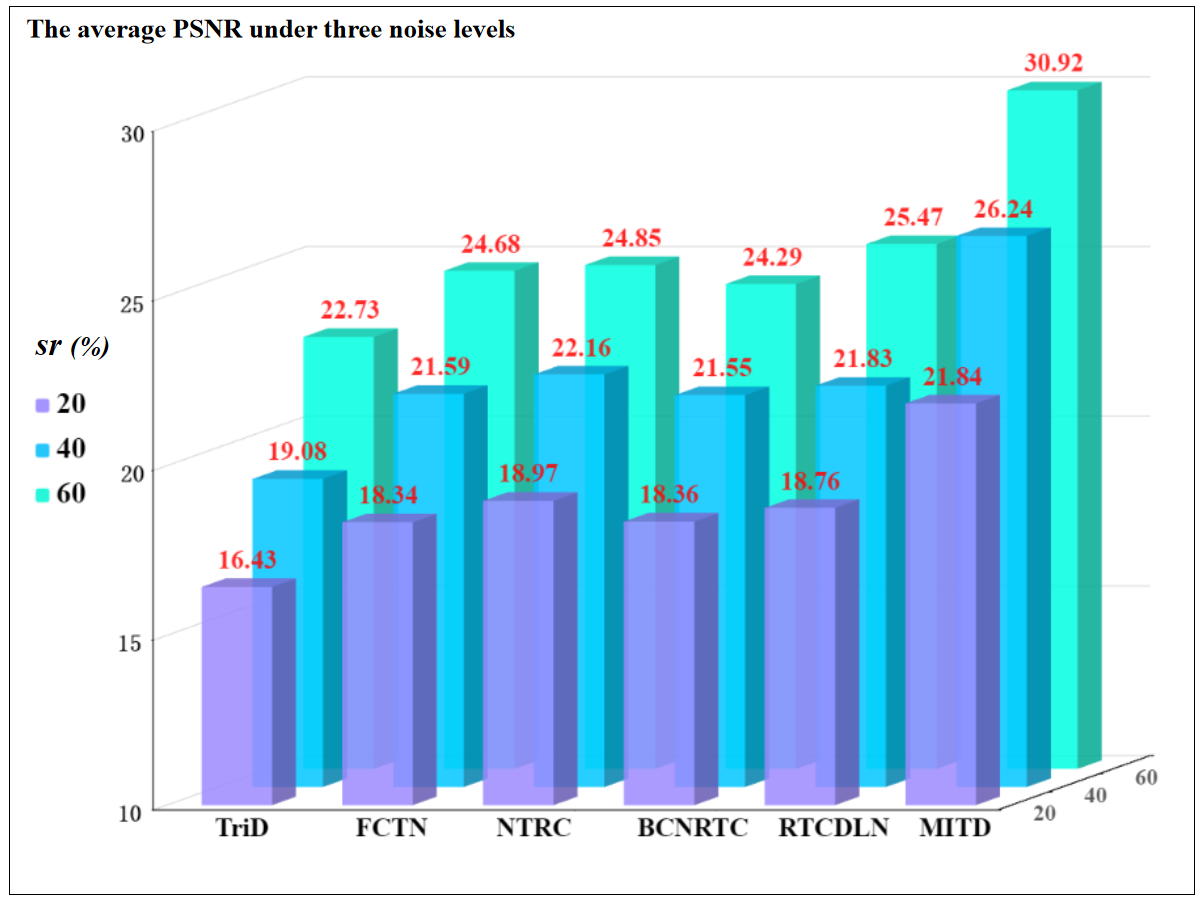}}
		
		\vspace{-5pt}
		\caption{ \small  The average PSNR  under three noise levels at different sampling rates.   }
		\label{CF5}
		\vspace{-20pt}
	\end{figure}
	From the figure,  the advantages of BCNRTC and RTCDLN become less pronounced and even inferior to those of FCTN and NTRC. This is primarily attributed to the merging of the fourth-order tensor into a third-order tensor, which disrupts the intrinsic correlations within the data, thereby diminishing the effectiveness of TNN-based RTC methods in restoring color video data. Moreover, NTRC performs better than FCTN on color video data. A possible reason is that although FCTN can establish direct connections among arbitrary factors, it inevitably introduces excessive complexity into the process, leading to degraded performance. Overall, the IMTD-based RTC method consistently achieves significant performance advantages. On one hand, it can directly handle high-order tensors with flexible rank adaptability. On the other hand, its implicit learning-based functional representation enables more effective capture and recovery of fine details in video and image data.
	
	\begin{figure*}[htbp]
		\centering
		\captionsetup[subfloat]{labelsep=none,format=plain,labelformat=empty}
		\includegraphics[width=12.5cm]{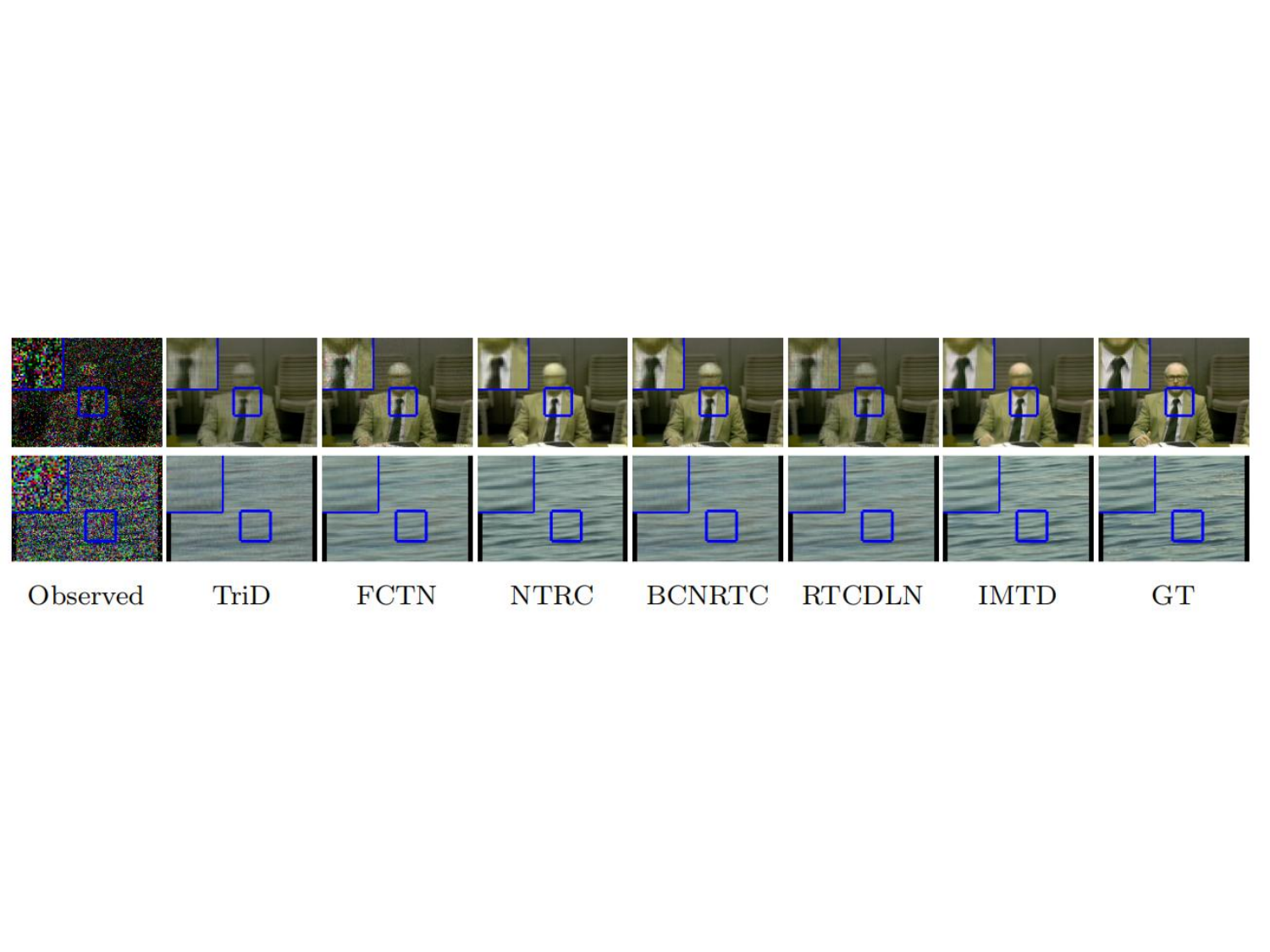}

		\vspace{-6pt}
		\caption{\small The visual comparison of restoration results on the `Man'  dataset(sr=0.4, $\sigma$=0.2) and `Ocean'  dataset(sr=0.6, $\sigma$=0.2) for each compared method. \tiny \label{fig65}}
	\end{figure*}
	
	\vspace{-13pt}
	Figure \ref{fig65} shows the performance of  compared methods on video datasets. The  reconstructed video of TriD exhibits obvious stripes and noise, primarily since it is only applicable to third-order tensors and lacks flexibility in rank selection. Both NTRC and FCTN achieved competitive performance compared to BCNRTC and RTCDLN, which indicates that the reconstruction performance of TNN-based approaches for fourth-order tensors is degraded. In contrast, our method still achieves optimal performance on video datasets. Furthermore, IMTD significantly outperforms TriD, which clearly demonstrates the advantages of our IMTD  in terms of both modeling capability and computational performance.
	
	\subsection{Comparison  on point cloud upsampling (PCU) task}
	Next, we evaluate our method on the PCU task to demonstrate its effectiveness on irregular, non-grid data. Several benchmark datasets are selected, including star, balls and heart \cite{Zhao2023}. We randomly sampled 5\% or 10\% of the points as input and applied the compared methods to upsample them.  Then, We compare IMTD with Snowflake \cite{Xiang2022}, SAPCU \cite{Zhao2023}, and LRTFR \cite{Luo2024}, using Chamfer Distance (CD) \cite{Yu2017} and F-Score \cite{Ta2015} as evaluation metrics. Table \ref{T44} records the corresponding quantitative results.
	
	\vspace{-10pt}
	\begin{table}[h]\small
		\centering
		\caption{\small The CD and F-score  for  three datasets upsampled by  compared methods.}
		\vspace{-5pt}
		\label{T44}
		\begin{tabular}{ccccccccc}
			\hline
			Dataset\qquad~& \multicolumn{2}{c}{Star} & &  \multicolumn{2}{c}{Ball} & & \multicolumn{2}{c}{Heart}  \\ \cline{2-3} \cline{5-6}  \cline{8-9} 
			Methods\qquad~ & CD & F-Score &  & CD & F-Score  &   &CD & F-Score     \\ \hline
			Snowflake \qquad~&  0.1274&  0.8496&   &0.0746 & 0.9351 &  & 0.0839 & 0.7862  \\
			SAPCU \qquad~&  0.0789&  0.9033&   &0.0527  & 0.9649 & & 0.0775 & 0.9136  \\
			LRTFR \qquad~& 0.0647 & 0.9368 & & 0.0442  & 0.9840 & & 0.0627 & 0.9615  \\
			IMTD \qquad~& \textbf{0.0581}  & \textbf{0.9642} & & \textbf{0.0435}   & \textbf{0.9866} &  & \textbf{0.0542}  & \textbf{0.9767} \\ \hline
		\end{tabular}
	\end{table}
	
	\vspace{-2pt}
	As shown in Table~\ref{T44}, the proposed IMTD consistently achieves superior performance across all three datasets. This implies that the point clouds predicted by IMTD are closer to the ground truth and exhibit better local structure and completeness. This can be primarily attributed to IMTD's architectural flexibility and its capability of exploiting multi-dimensional data features.
	The corresponding visual results are presented in Figure~\ref{fig6}.
	\begin{figure*}[htbp]
		\vspace{-0pt}
		\centering
		
		\captionsetup[subfloat]{labelsep=none,format=plain,labelformat=empty}
		
		\includegraphics[width=12.6cm]{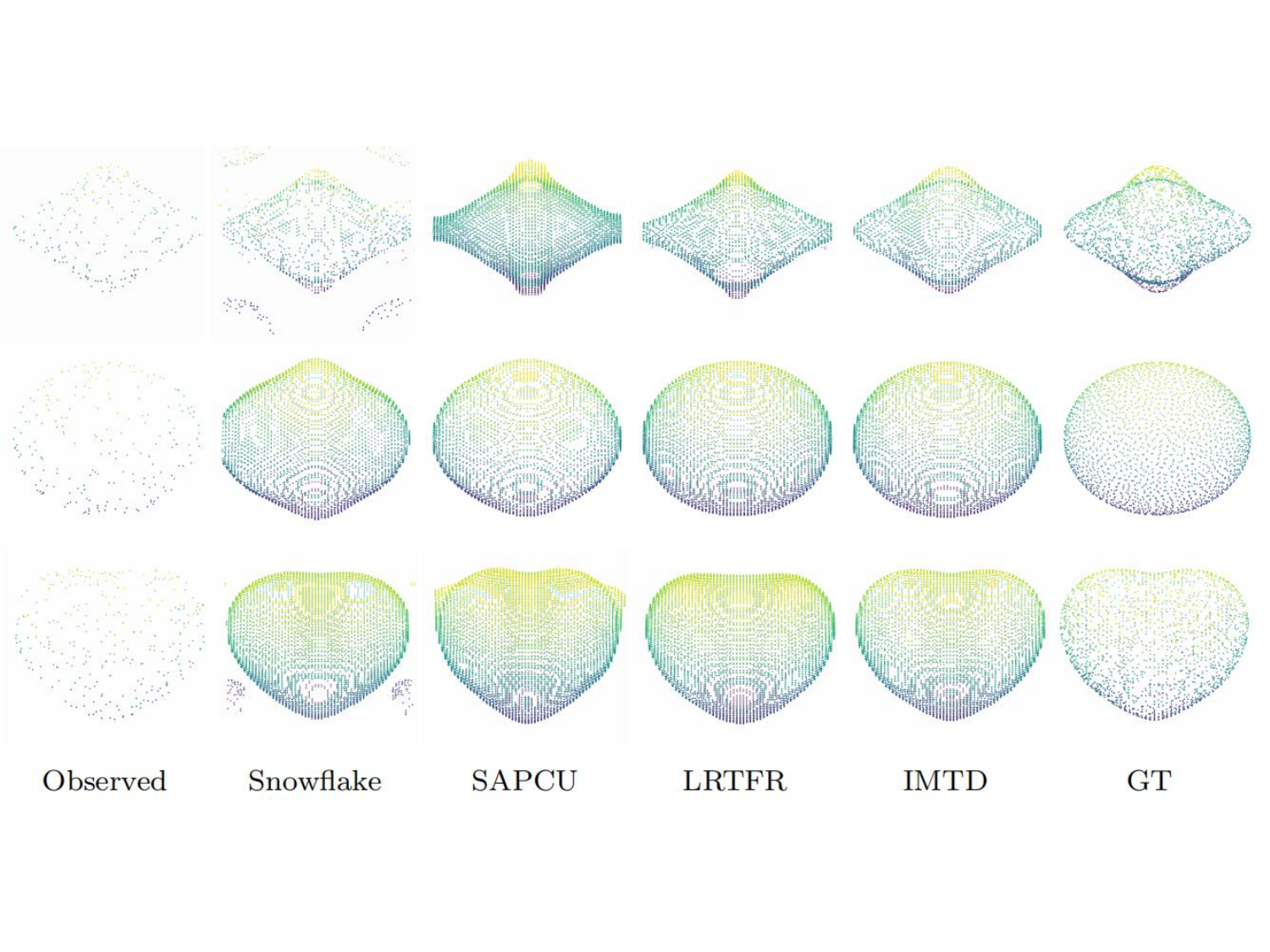}

		\vspace{-2pt}
		\caption{\small The visual comparison of compared methods on point cloud upsampling. From top to bottom: Star ($sr = 0.1$), Balls ($sr = 0.1$) and Heart ($sr = 0.05$). \tiny \label{fig6}}
		\vspace{-10pt}
	\end{figure*}
	From the figure, Snowflake's upsampling introduces additional erroneous points on both `Star' and `Heart' datasets, which may be attributed to the use of deconvolution. Moreover, its reconstruction of the `ball' dataset exhibits noticeable geometric distortions. This may be attributed to limited generalization, as Snowflake is trained on specific shape categories.  SAPCU, LRTFR, and IMTD are all based on implicit neural representations. SAPCU does not exploit the intrinsic low-rank structure of the data, resulting in noticeable deviations in its upsampling results. LRTFR adopts Tucker decomposition and parameterizes the factor matrices, achieving competitive performance. In comparison,  IMTD achieves greater preservation in overall shape and fine-grained geometric details. This advantage stems not only from IMTD’s architectural flexibility, but also from the parameterization of all  factor tensors, which enables the model to capture more refined geometric features.

	\subsection{Running time}
	Next, we compare the execution of each method, as shown in Figure \ref{CF7}. For the RTC task, IMTD exhibits increased computational efficiency than the tensor decomposition-based approaches owing to the utilization of GPU acceleration. The TNN-based methods incur higher computational cost,  due to the expensive and repeated SVD calculations during the optimization process. In the PCU task, all the compared methods leverage GPU acceleration  during training. Snowflake adopts a supervised learning framework, leading to lengthy training times but enables fast inference. SAPCU, LRTFR, and IMTD are unsupervised approaches. SAPCU directly represents point clouds using implicit neural networks, resulting in higher computational cost. LRTFR reduces dimensionality through Tucker decomposition but suffers from expensive core tensor updates. Overall, our approach demonstrates superior efficiency in both tasks.
	\begin{figure}[htbp]
		\vspace{-10pt}
		\centering
		\hspace{-0.4cm}\subfloat[RTC  (Pepper)]{\includegraphics[width=4cm]{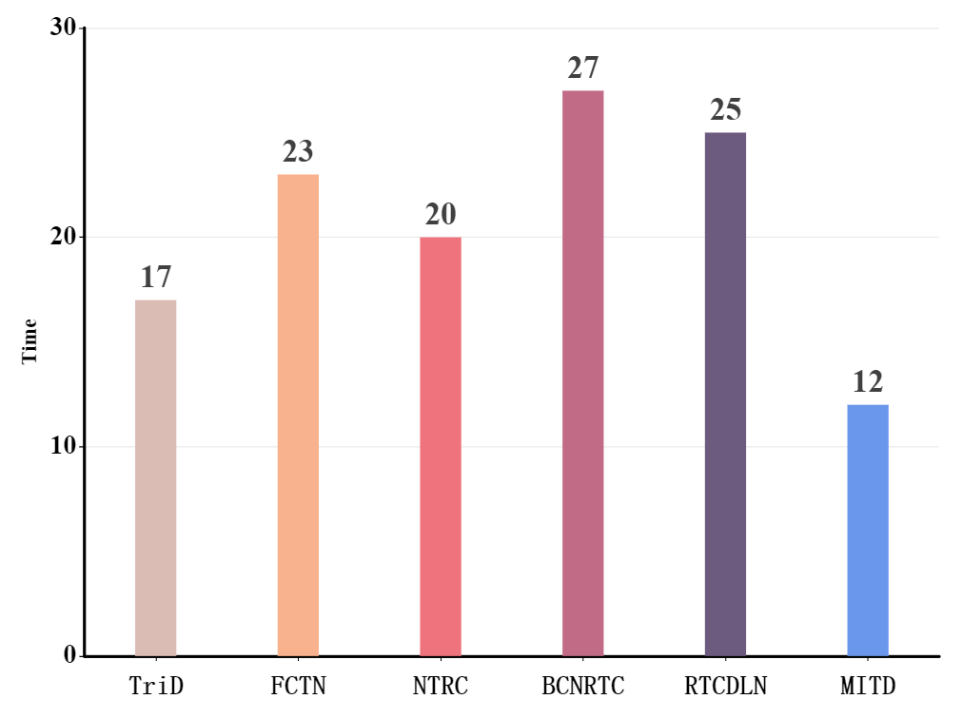}}
		\subfloat[RTC (Ocean)]{\includegraphics[width=4cm]{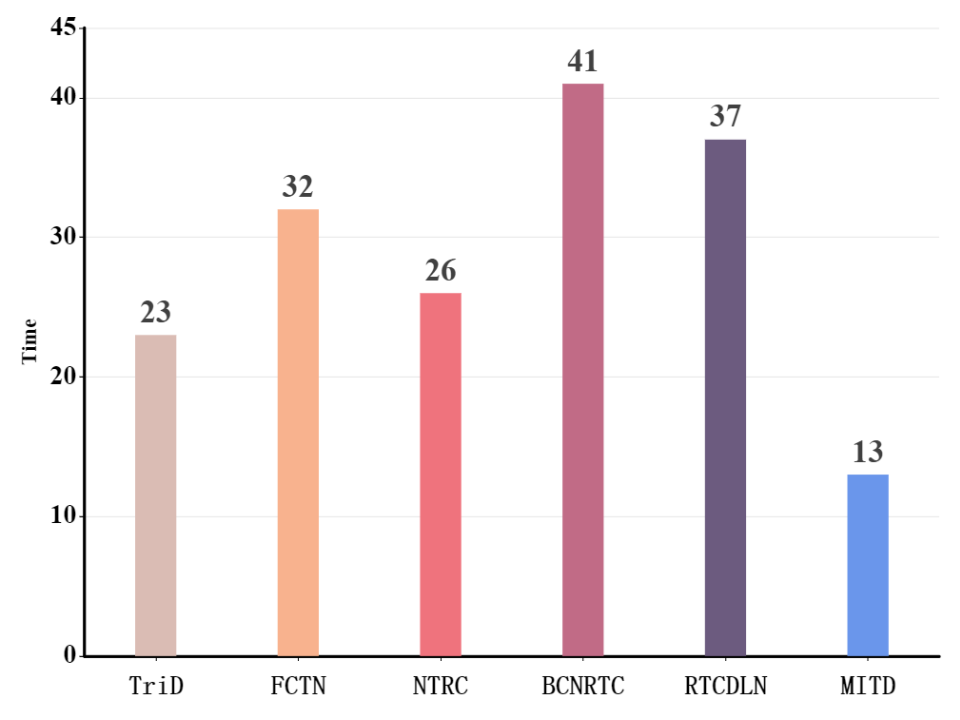}}
		\subfloat[PDU (Heart)]{\includegraphics[width=4cm]{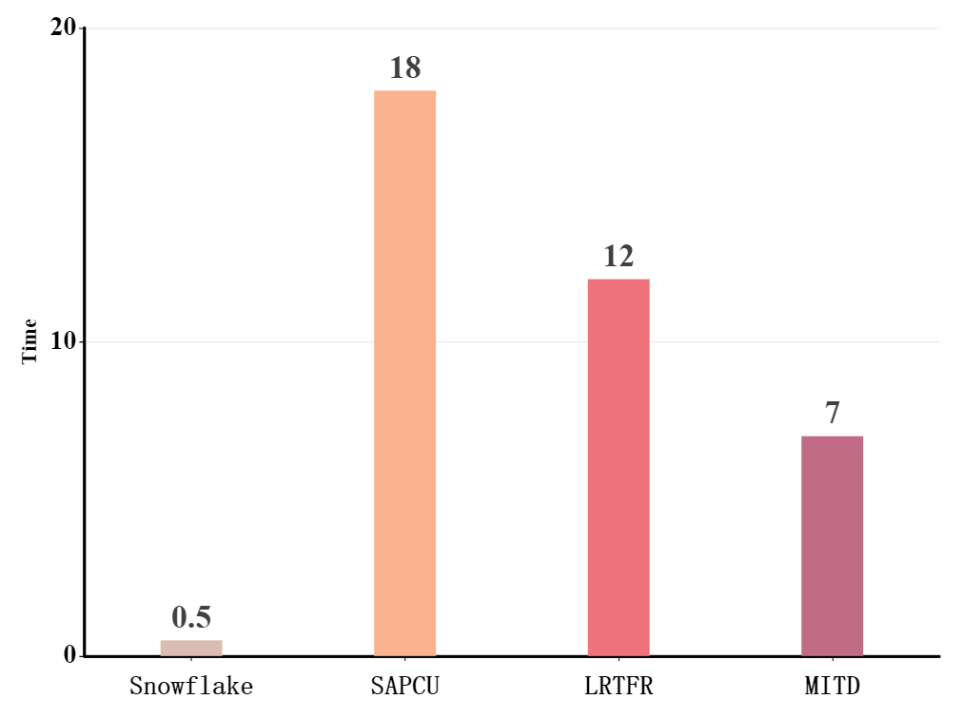}}
		
		\vspace{-3pt}
		\caption{\small  The running time of the compared methods for different tasks.   }
		\label{CF7}
		\vspace{-10pt}
	\end{figure}
	
	\section{Conclusion}\label{G}
	In this paper, we propose the Implicit Multiple Tensor Decomposition (IMTD), a novel framework that generalizes triple decomposition to tensors of arbitrary order.   In addition, it allows the short dimensions of the factor tensors to differ.  By representing the factor tensors as implicit functions, IMTD enables continuous modeling of tensor data  and  supports decomposition of irregular and non-grid data. Theoretically, we investigate the connections between IMTD and classical tensor decompositions, and develop a computational algorithm for addressing the reconstruction problem within the IMTD framework. A KL-free convergence analysis is also provided. Finally, extensive numerical experiments further validate the effectiveness and broad applicability of the proposed method.

	\section*{Acknowledgments}
	We would like to acknowledge the assistance of volunteers in putting
	together this example manuscript and supplement.
	
	\bibliographystyle{siamplain}
	\bibliography{CiteTex}
\end{document}